\documentclass[11pt]{amsart}

\usepackage{anysize} \marginsize{1.3in}{1.3in}{1in}{1in}
\setlength{\marginparwidth}{1in}
\let\OldMarginpar\marginpar
\renewcommand{\marginpar}[1]{\OldMarginpar{\footnotesize#1}}

\linespread{1.04}
\usepackage{floatrow}
\usepackage{xcolor}
\usepackage{amsmath}
\usepackage{mathtools}
\usepackage{enumerate}
\usepackage[all]{xy}
\usepackage[utf8]{inputenc}
\usepackage{varioref}
\usepackage{amsfonts}
\usepackage{amssymb}
\usepackage{bbm}
\usepackage{tikz-cd}
\usepackage{graphicx}
\usepackage{mathrsfs}
\usepackage[hypertexnames=false,pdftex,
pdfpagemode=UseNone,
breaklinks=true,
extension=pdf,
colorlinks=true,
linkcolor=blue,
citecolor=red,
urlcolor=blue,
]{hyperref}
\usepackage{cleveref}

\makeatletter
\@namedef{subjclassname@2020}{%
	\textup{2020} Mathematics Subject Classification}
\makeatother

\subjclass[2020]{05E40, 13P10, 12H05 (primary), 52B20 (secondary)}
\keywords{Differential ideal, Gr\"obner basis, multivariate jet, perfect graph, regular triangulation, polytope, Ehrhart polynomial.\\{This version of the manuscript was accepted for publication in the {\em Special Issue of the Journal of Symbolic Computation on the occasion of MEGA 2021}}}

\title[Combinatorial Differential Algebra of $x^p$]{Combinatorial Differential Algebra of $x^p$}

\author{Rida Ait El Manssour}
\address{Rida Ait El Manssour (Max-Planck-Institut f\"ur Mathematik in den Naturwissenschaften, Inselstra{\ss}e 22, 04103 Leipzig, Germany)} 
\email{rida.manssour@mis.mpg.de} 

\author{Anna-Laura Sattelberger}
\address{Anna-Laura Sattelberger (Max-Planck-Institut f\"ur Mathematik in den Naturwissenschaften, Inselstra{\ss}e 22, 04103 Leipzig, Germany)}
\email{anna-laura.sattelberger@mis.mpg.de}

\let\origmaketitle\maketitle
\def\maketitle{
	\begingroup
	\def\uppercasenonmath##1{} 
	\let\MakeUppercase\relax 
	\origmaketitle
	\endgroup
}

\newtheoremstyle{citing}
{}
{}
{\itshape}
{}
{\bfseries}
{\textbf{.}}
{.5em}
{\thmnote{#3}}

\usepackage{amsthm}

\theoremstyle{theorem}
\newtheorem{theorem}{Theorem}[section]
\newtheorem{question}[theorem]{Question}
\newtheorem{lemma}[theorem]{Lemma}
\newtheorem{corollary}[theorem]{Corollary}
\newtheorem{proposition}[theorem]{Proposition}

\theoremstyle{definition}

\newtheorem{definition}[theorem]{Definition}

\newenvironment{example}
{\pushQED{\qed}\examplex}
{\popQED\endexamplex}

\newenvironment{remark}
{\pushQED{\qed}\remarkx}
{\popQED\endremarkx}

\newtheorem*{claim}{Claim}

\newtheoremstyle{citing}
{}
{}
{\itshape}
{}
{\bfseries}
{\textbf{.}}
{.5em}
{\thmnote{#3}}
{\theoremstyle{citing}
	\newtheorem*{custom}{}}

\DeclareMathOperator{\NN}{\mathbb{N}}
\DeclareMathOperator{\CC}{\mathbb{C}}
\DeclareMathOperator{\RR}{\mathbb{R}}
\DeclareMathOperator{\ZZ}{\mathbb{Z}}
\DeclareMathOperator{\QQ}{\mathbb{Q}}
\DeclareMathOperator{\init}{in}
\DeclareMathOperator{\conv}{conv}
\DeclareMathOperator{\lm}{lm}
\DeclareMathOperator{\Spec}{Spec}
\DeclareMathOperator{\lcm}{LCM}
\DeclareMathOperator{\STAB}{Stab}
\DeclareMathOperator{\QSTAB}{QStab}

\begin{document}
\thispagestyle{empty}

\begin{abstract}
We link $n$-jets of the affine monomial scheme defined by $x^p$ to the stable set polytope of some perfect graph. We prove that, as $p$ varies, the dimension of the coordinate ring of a certain subscheme of the scheme of $n$-jets as a $\CC$-vector space is a polynomial of degree $n+1,$ namely the Ehrhart polynomial of the stable set polytope of that graph. One main ingredient for our proof is a result of Zobnin who determined a differential Gr\"obner basis of the differential ideal generated by~$x^p.$ We generalize Zobnin's result to the bivariate case. We study $(m,n)$-jets, a higher-dimensional analog of jets, and relate them to regular unimodular triangulations.
\end{abstract}

\maketitle
\setlength{\parindent}{1em}
\setcounter{tocdepth}{1}
\vspace*{-5mm}
\tableofcontents
\vspace*{-5mm}

\section*{Introduction}
\thispagestyle{empty} 
Differential algebra---an infinite version of polynomial algebra in a sense---studies polynomial partial differential equations with tools from commutative algebra. Differential algebraic geometry studies varieties that are defined by a system of polynomial PDEs. An upper bound for the number of components of such a variety was recently constructed in~\cite{upperboundDA}.  Differential algebraic geometry comes with an own version of the Nullstellensatz, the {\em differential} Nullstellensatz, relating points of a differential variety with formal power series solutions of the defining system of equations. Lower and upper bounds for the effective differential Nullstellensatz are provided in~\cite{diffNST}.
In this article, we transfer the combinatorial flavor of commutative algebra~\cite{combalg} to differential algebra and undertake first steps in combinatorial differential algebra. We present a case study of the fat point $x^p$ on the affine line. 

Denote by $C_{p,n}$ the ideal in $R_n=\CC[x_0,\ldots,x_n]$ generated by the coefficients of 
$f_{p,n}=(x_0+x_1t+\cdots+x_nt^n)^p,$ read as a polynomial in~$t$ with coefficients in $R_n.$ The affine scheme defined by $C_{p,n}$ is a subscheme of the scheme of $n$-jets of the fat point~$x^p$ on the affine line. 
Bernd Sturmfels suggested investigating the following question on the dimension of the coordinate ring of that scheme.
\begin{custom}[Question~\ref{conjBernd}]
	For fixed $n\in \NN,$  is the sequence $\left( \dim_{\CC} (R_n/C_{p,n}) \right)_{p\in \NN}$ a polynomial in~$p$ of degree $n+1$?
	\end{custom} The point of departure is experimental observations. A first main result of this article is~\Cref{thm Ehrhart}, proving that the answer to this question is {\em yes}. The dimension of this quotient ring provides a dimensional bound in the study of the multiplicity structure of the arc space of the fat point which was undertaken by the first-named author and Pogudin in~\cite{AEMP21}.

One main tool for the proof is a result from differential algebra. The object of study is the {\em ring of differential polynomials in $x$ over $\CC.$} The latter is the differential ring $\CC[x^{(\infty)}]=(\CC[x,x^{(1)},x^{(2)},\ldots],\partial),$ i.e., the polynomial ring in the countably infinitely many variables $\{x^{(k)}\}_{k\in \NN}$ with the derivation $\partial$ acting as $\partial(x^{(k)})=x^{(k+1)},$ $\partial|_{\CC}\equiv 0.$ An ideal $I$ in $\CC[x^{(\infty)}]$ is a {\em differential ideal} if $\partial(I)\subseteq I.$ Zobnin~\cite{Zobnin} proved that the singleton $\{x^p\}$ is a differential Gr\"obner basis of the differential ideal generated by $x^p$ with respect to certain monomial orderings, namely so called $\beta$-orderings. 
\\Denote by $I_{p,n}$ the differential ideal generated by $x^p$ and $x^{(n)}.$ Then the map
$$ R_n/C_{p,n}  \stackrel{\cong}{\longrightarrow} \CC[x^{(\infty)}]/I_{p,n+1}, \quad x_k \mapsto \frac{1}{k!}x^{(k)}$$
is an isomorphism and Zobnin's result can be used to investigate $C_{p,n}.$	
An investigation of the initial ideal of $C_{p,n}$ then reveals the following.
\begin{custom}[\Cref{thm Ehrhart}]
As $p$ varies, the number $\dim_{\CC} (R_n/C_{p,n})$ is polynomial of degree $n+1.$
It is the Ehrhart polynomial of the convex polytope $$P_n  \coloneqq  \left\{(u_0,\ldots,u_n)\in (\mathbb{R}_{\geq 0})^{n+1} \big| u_i+u_{i+1}\leq 1 \text{ for all } 0\leq i\leq n-1\right\}$$ 
evaluated at $p-1,$ i.e., it counts the lattice points of the polytope $P_n$ dilated by $p-1.$
\end{custom} 
In \cite{BMS}, the authors study the arc scheme of a variety, which is defined to be the projective limit of the jet schemes.  Arc schemes are an important tool from algebraic geometry to study formal power series solutions to the defining equations of a variety. Via Hilbert--Poincar\'{e} series, they bring a combinatorial flavor in, which is different from the one we present here; among others, they relate the double point to partitions of integers.
A study of jet schemes of monomial ideals was also undertaken in~\cite{GS06}. Therein, it is shown that jet schemes of monomial ideals are in general not monomial, but their reduced subschemes are. A study of the multiplicity of jet schemes of simple normal crossing divisors was undertaken by Yuen in~\cite{YuenSNC}. In~\cite{YuenWedge}, she introduced {\em truncated $m$-wedges}, a two-dimensional analog of jets, studying differentials in two variables whose orders add up to~$m$ at most.  In Definition~\ref{def mnjets}, we introduce another generalization of jets to higher dimensions, namely {\em $(m,n)$-jets}, allowing for derivatives in the variables up to order $m$ and $n,$ respectively. 

 We extend our studies of $\dim_{\CC}(R_n/C_{p,n})$ to the case of two independent variables and give a link to regular unimodular triangulations. For the theory of triangulations, we refer our readers to~\cite{triang, BerndGrobner}. 
We study the ring of {\em partial} differential polynomials in $x$ over $\CC$, i.e.,
 $\CC[x^{(\infty,\infty)}]  \coloneqq  \left( \CC[x^{(k,\ell) }]_{k,\ell \in \NN}, \partial_s,\partial_t\right)$ in two independent variables $s,t$ and  consider the differential ideal $I_{p,(m,n)}$ generated by $x^p, x^{(m,0)},$ and $x^{(0,n)}.$
 Denote by $C_{p,(m,n)}$ the ideal in $\CC[\{x_{k,\ell}\}_{0\leq k \leq m,0\leq \ell\leq n}]$ generated by
  the coefficients of  
 $$ f_{p,(m,n)} \coloneqq \left( \sum_{k=0}^{m}\sum_{\ell=0}^n x_{k,\ell}t^ks^{\ell} \right)^p ,$$
 read as a bivariate polynomial in $s$ and $t.$ In analogy to the one-dimensional case, we refer to points of the affine scheme associated to $C_{p,(m,n)}$ as {\em $(m,n)$-jets} of $x^p.$ The ideals $I_{p,(m,n)}$ and $C_{p,(m,n)}$ then are related just as in the univariate case. 
  
 For a triangulation $T$ of the $m\times n$ rectangle and fixed $p,$ we define {\em $T$-orderings} on the truncated partial differential ring $\CC[x^{(\leq m,\leq n)}]$ as those monomial orderings for which  the leading monomials of $\{(x^p)^{(k,\ell)}\}_{k=0,\ldots,mp,\ell = 0,\ldots,np}$ are supported on the triangles of~$T.$ Note that this is in contrast to the usual occurrence of regular triangulations in combinatorial commutative algebra, where the leading monomials are supported on {\em non-}faces (see  Sturmfels' correspondence \cite[Theorem 9.4.5]{triang}). We~consider the placing triangulation $T_{m,2}$  of the point configuration $$[(0,0),(0,1),(0,2),(1,0),(1,1),(1,2),\ldots,(m,0),(m,1),(m,2)].$$ This is a regular  unimodular triangulation of the $m\times 2$ rectangle induced by the vector $(1,2,2^2,\ldots, 2^{3m+2})$ in the lower hull convention. 
 
We prove the following generalization of Zobnin's result to the partial differential ring in two independent variables.
 \begin{custom}[\Cref{thm zobnin m2}]
 	For  all $m, p\in \NN,$ $\{(x^p)^{(k,\ell)}\}_{0\le k\le mp, 0\le \ell \le 2p}$ is a Gr\"obner basis of the differential ideal generated by  $ x^p $  in the truncated partial differential ring $\CC[x^{(\le m,\le 2)}]$ with respect to any $T_{m,2}$-ordering. 
 \end{custom}
This theorem is the main ingredient for the following theorem.
	\begin{custom}[\Cref{prop dimmultivar}]
	Let $m,p \in \NN.$ The number $\dim_{\CC} ( R_{m,2}/C_{p,(m,2)})$ is the Ehrhart polynomial of the \mbox{$3(m+1)$-dimensional} lattice polytope
	\begin{align*}
	P_{(m,2)}  \coloneqq  &\big\{(u_{00},u_{01},u_{02},\ldots,u_{m0},u_{m1},u_{m2})\in (\mathbb{R}_{\geq 0})^{3(m+1)} \big| u_{k_1,l_1}+u_{k_2,l_2}+u_{k_3,l_3}\leq 1\\ &\ \text{for all indices s.t. }\{(k_1,l_1),(k_2,l_2),(k_3,l_3)\} \text{ is a triangle of } T_{m,2} \big\}
	\end{align*}
	evaluated at~$p-1.$
\end{custom}
In \Cref{section regtrg}, we study regular unimodular triangulations of the $m \times n$ rectangle.  We consider the weighted reverse lexicographical ordering on $\CC[\{x_{k,\ell}\}_{0\leq k \leq m,0\leq \ell\leq n}]$ for vectors inducing those triangulations in the upper hull convention. We show that for some of them, the coefficients of $f_{p,(m,n)}$ are a Gr\"obner basis of the ideal~$C_{p,(m,n)}.$

We end our article with an outlook to future work. Our results suggest to further develop combinatorial differential algebra. 

\medskip
\noindent \textbf{Acknowledgments.} We thank Bernd Sturmfels for suggesting the problem and for insightful discussions. We are grateful to Alex Fink and Gleb Pogudin for useful discussions and to Taylor Brysiewicz, Lars Kastner, Marta Panizzut, and Paul Vater for their great support with computations in {\tt polymake}~\cite{polymake} and {\tt TOPCOM}~\cite{topcom}. Last but not least, we  thank  Nero Budur for pointing out the related article~\cite{af20} to us. 

\section{Differential ideals and jets}\label{section diffideals}
\subsection{One independent variable} In this section, we repeat basics from differential algebra and give a link to the theory of jet schemes. For further background on differential algebra, we refer the reader to the books~\cite{Kolchin,Ritt}.

Consider the polynomial ring $\CC[x,x^{(1)},x^{(2)},\ldots]$ in the countably infinitely many variables $\{x^{(k)}\}_{k\in \NN}.$ Denoting $x\coloneqq x^{(0)},$ let $\CC[x^{(\infty)}]$ be the differential ring $$ \CC[x^{(\infty)}]  \coloneqq  \left(\CC[x,x^{(1)},x^{(2)},\ldots],\partial\right),$$
where the derivation acts as $\partial(x^{(k)})=x^{(k+1)}$ and $\partial|_{\CC} \equiv 0.$  
\begin{definition}
An ideal $I\triangleleft \CC[x^{(\infty)}]$ is called a {\em differential ideal} if $\partial(I)\subseteq I.$ For a subset $J$ of $\CC[x^{(\infty)}],$ we denote by $\langle J \rangle ^{(\infty)}$ the differential ideal generated by $J.$
\end{definition}
\noindent We denote by $I_{p,n}\coloneqq \langle x^p,x^{(n)}\rangle^{(\infty)}$ the differential ideal  in $\CC[x^{(\infty)}]$ generated by $x^p$ and $x^{(n)}$ and by $\CC[x^{(\leq n)}]$ the {\em truncated} differential polynomial ring $\CC[x^{(\infty)}]/\langle x^{(n+1)}\rangle^{(\infty)}.$

For $n\in \NN,$ denote by $$ R_n  \coloneqq  \CC[x_0,\ldots,x_n]$$ the polynomial ring in $n+1$ variables with coefficients in the complex numbers. 
Consider $f_{p,n}=(x_0+x_1t+\cdots+x_nt^n)^p \in R_n[t].$ By the multinomial theorem,
$$ f_{p,n} = \sum_{k_0+\cdots +k_n =p} \binom{p}{k_0,k_1,\ldots,k_n}x_0^{k_0}\cdots x_n^{k_n}t^{k_1+2k_2+\cdots + nk_n},$$
where $$ \binom{p}{k_0,k_1,\ldots,k_n}  = \frac{p!}{k_0!\cdots k_n!}.$$

Denote by $C_{p,n} \triangleleft R_n$ the ideal generated by the coefficients of $f_{p,n}.$
The affine scheme $\Spec (R_n/ C_{p,n} )$ is a subscheme of the scheme of $n$-jets of~$\Spec (\CC[x]/\langle x^p\rangle ).$ 

Up to constants, the coefficient of $t^k$ in $f_{p,n}$ recovers the $k$-th derivative of the monomial~$x^p,$ giving rise to the following relation between the differential ideal $I_{p,n}$ and the ideal~$C_{p,n}$ in the polynomial ring $R_n.$
\begin{proposition}\label{prop iso}
The following map is an isomorphism of $\CC$-algebras:
$$ R_n/C_{p,n}  \stackrel{\cong}{\longrightarrow} \CC[x^{(\infty)}]/I_{p,n+1}, \quad x_k \mapsto \frac{1}{k!}x^{(k)}.$$
\end{proposition}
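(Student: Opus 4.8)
The plan is to reduce the asserted isomorphism to an equality of two ideals inside one and the same polynomial ring in $n+1$ variables. First I would observe that $\langle x^{(n+1)}\rangle^{(\infty)}=\langle x^{(k)} : k\ge n+1\rangle$ as an ordinary ideal, so that the canonical surjection $\pi\colon\CC[x^{(\infty)}]\to\CC[x^{(\leq n)}]$ has kernel $\langle x^{(n+1)}\rangle^{(\infty)}$ and realizes $\CC[x^{(\leq n)}]$ as the ordinary polynomial ring $\CC[x,x^{(1)},\ldots,x^{(n)}]$. Since $x_k\mapsto\tfrac{1}{k!}x^{(k)}$ sends a set of polynomial generators bijectively onto nonzero scalar multiples of a set of polynomial generators, it defines an isomorphism of polynomial rings $\phi\colon R_n\to\CC[x^{(\leq n)}]$. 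It then suffices to show $\phi(C_{p,n})=\pi(I_{p,n+1})$: granting this, $\phi$ descends to an isomorphism $R_n/C_{p,n}\to\CC[x^{(\leq n)}]/\pi(I_{p,n+1})$, and since $\ker\pi\subseteq I_{p,n+1}$ the target is canonically $\CC[x^{(\infty)}]/I_{p,n+1}$; tracking the class of $x_k$ shows the composite is exactly the map in the statement.

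The core step is to identify $\phi$ of the coefficients of $f_{p,n}$ with the successive derivatives of $x^p$. I would introduce the differential ring homomorphism $\psi\colon\CC[x^{(\infty)}]\to\CC[x^{(\leq n)}][t]$, where the target carries the derivation $\tfrac{d}{dt}$ annihilating the variables $x^{(k)}$, determined by $x^{(k)}\mapsto\tfrac{d^k}{dt^k}\tau(t)$ with $\tau(t)\coloneqq\sum_{k=0}^{n}\tfrac{1}{k!}x^{(k)}t^k$; this is a well-defined ring map and intertwines $\partial$ with $\tfrac{d}{dt}$ by construction. Composing $\psi$ with evaluation at $t=0$ recovers $\pi$, because $\bigl[\tfrac{d^k}{dt^k}\tau(t)\bigr]_{t=0}$ equals $x^{(k)}$ for $k\le n$ and vanishes for $k>n$. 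On the other hand, since $\psi$ is differential, $\psi\bigl((x^p)^{(k)}\bigr)=\tfrac{d^k}{dt^k}\bigl(\tau(t)^p\bigr)$, so evaluating at $t=0$ and using Taylor's formula over $\CC$ yields that $\pi\bigl((x^p)^{(k)}\bigr)$ equals $k!$ times the coefficient of $t^k$ in $\tau(t)^p$. Finally, extending $\phi$ to $R_n[t]\to\CC[x^{(\leq n)}][t]$ by fixing $t$, we have $\tau(t)^p=\bigl(\sum_k\phi(x_k)t^k\bigr)^p=\phi(f_{p,n})$; comparing the coefficient of $t^k$ on the two sides gives $\pi\bigl((x^p)^{(k)}\bigr)=k!\,\phi(c_k)$, where $c_k$ denotes the coefficient of $t^k$ in $f_{p,n}$.

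To finish, I would invoke the standard fact that $\langle x^p\rangle^{(\infty)}$ coincides, as an ordinary ideal, with $\langle(x^p)^{(k)} : k\ge 0\rangle$: it contains $x^p$, and the Leibniz rule shows the latter ideal is already $\partial$-stable. Using that $\phi$ and $\pi$ are surjective ring maps, this gives
\[
\phi(C_{p,n})=\bigl\langle\,\phi(c_k) : k\ge 0\,\bigr\rangle=\bigl\langle\,\pi\bigl((x^p)^{(k)}\bigr) : k\ge 0\,\bigr\rangle=\pi\bigl(\langle x^p\rangle^{(\infty)}\bigr).
\]
As $I_{p,n+1}=\langle x^p\rangle^{(\infty)}+\langle x^{(n+1)}\rangle^{(\infty)}=\langle x^p\rangle^{(\infty)}+\ker\pi$, we obtain $\pi(I_{p,n+1})=\pi\bigl(\langle x^p\rangle^{(\infty)}\bigr)=\phi(C_{p,n})$, which is precisely what was needed.

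I do not expect a serious obstacle: once this dictionary is set up the argument is formal. The only delicate point is the bookkeeping between the infinite differential ring $\CC[x^{(\infty)}]$ and its truncation $\CC[x^{(\leq n)}]$ --- one must be careful that passing to the quotient by $\langle x^{(n+1)}\rangle^{(\infty)}$ is compatible with evaluating $\psi$ at $t=0$, and that the high-order derivatives $(x^p)^{(k)}$ with $k>n$ land in the ideal generated by the lower-order coefficients rather than yielding new relations. As an alternative to $\psi$, one could instead verify directly that the coefficient of $t^k$ in $\bigl(\sum_j\tfrac{1}{j!}x^{(j)}t^j\bigr)^p$ equals $\tfrac{1}{k!}(x^p)^{(k)}$ modulo $\langle x^{(n+1)},x^{(n+2)},\ldots\rangle$, by comparing the multinomial expansion with the chain-rule expansion of $(x^p)^{(k)}$; the content is the same.
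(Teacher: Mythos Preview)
Your proof is correct, and it takes a genuinely different route from the paper's. The paper proceeds by a direct combinatorial computation: it expands $(x^p)^{(k)}$ via the chain rule as a sum over $(j_0,\ldots,j_{p-1})$ with $\sum j_i=k$, then collects terms according to the multiplicities $i_\ell$ of each value $\ell$ in the multiset $\{j_0,\ldots,j_{p-1}\}$, and matches the resulting multinomial coefficients against those in the expansion of $f_{p,n}$. In other words, the paper carries out precisely the ``alternative'' you describe in your final sentence.

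Your approach replaces this bookkeeping with the universal property of $\CC[x^{(\infty)}]$ as the free differential $\CC$-algebra on one generator: the differential homomorphism $\psi$ determined by $x\mapsto\tau(t)$ automatically sends $(x^p)^{(k)}$ to $\tfrac{d^k}{dt^k}\tau(t)^p$, and Taylor's formula then identifies $\pi((x^p)^{(k)})$ with $k!$ times the $t^k$-coefficient of $\phi(f_{p,n})$ without ever writing out a multinomial coefficient. This is cleaner and makes the structural reason for the isomorphism transparent; the paper's version has the compensating virtue of giving the explicit formula for $(x^p)^{(k)}$ in the truncated ring, which is used again (implicitly) later in the paper when leading monomials are analyzed. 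Your concern about high-order derivatives is already handled by your argument: since $\tau(t)^p$ has degree $np$ in $t$, the identity $\pi((x^p)^{(k)})=k!\,\phi(c_k)$ shows that $\pi((x^p)^{(k)})=0$ for $k>np$, so no new relations arise.
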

\begin{proof}
Notice that $(x^p)^{(k)}$ is given  as follows:
$$(x^p)^{(k)} = \sum_{j_0+\cdots +j_{p-1}=k} \binom{k}{j_0,\ldots,j_{p-1} } x^{(j_0)}\cdots x^{(j_{p-1})}.$$
Let us consider its image in the truncated differential ring $\CC[x^{(\leq n)}].$ We denote by $i_{\ell}$ the multiplicity of~$\ell$ in the multiset $\left\{j_0,\ldots ,j_{p-1}\right\},$ so that $i_0+\cdots +i_n=p$ and $i_1+2i_2+\cdots +ni_n=k.$ Let $y_i:=x^{(i)}$ for all $0\le i\le n.$ Then
$$(x^{p})^{(k)} = \sum_{j_0+\cdots +j_{p-1}=k} \binom{k}{j_0,\ldots, j_{p-1}} y_0^{i_0}\cdots y_n^{i_n}.$$
In the previous sum, there are some repeated terms: for each $\{j_0,\ldots, j_{p-1}\}$ by exchanging the order of $j_i$ and respecting the numbers $i_0,\ldots, i_n,$ we get the same term. We have  $\binom{p}{i_0}$ possibilities to choose $i_0$ many places for $0$ in the multiset $\{j_0,\ldots j_{p-1}\}.$ We have  $\binom{p-i_0}{i_1}$ possibilities to choose $i_1$ many places for $1$ from the remaining places in the set $\{j_0,\ldots, j_{p-1}\}.$ We continue like this and obtain
$$(x^p)^{(k)} = \sum_{(i_0,\ldots,i_n)\in I} \binom{p}{i_0,\ldots, i_n} \cdot \frac{k!}{(0!)^{i_0}\cdots (n!)^{i_n}}\cdot y_0^{i_0}\cdots y_n^{i_n},$$ where $I=\{(i_0,i_1,\ldots,i_p)\mid i_0+\cdots+ i_n=p \text{ and }i_1+\cdots +ni_n=k\}.$
Denote by $\varphi$ the following homomorphism of rings:
$$\varphi  \colon  \CC[x^{(\leq n)}] \to R_n/C_{p,n}, \quad  x^{(k)} \mapsto  k!\cdot x_k.$$
The kernel of $\varphi$ is the ideal generated by $\{(x^p)^{(k)}\}_{k\in \NN}.$ Thus,
$$ \CC[x^{(\infty)}]/I_{p,n+1} \cong \CC[x^{(\leq n)}]/\langle \{(x^p)^{(k)}\mid k\in \NN\}\rangle\cong R_n/C_{p,n},$$
concluding the proof.
\end{proof}
\begin{remark} 
The statement of \Cref{prop iso} is contained in the literature, such as \cite[Proposition 2.3]{Mourtada} or \cite[Proposition 5.12]{MS10}. To make this article self-contained, we decided to provide a proof.
\end{remark}
Following~\cite{Zobnin}, we now repeat the concept of {\em differential Gr\"{o}bner bases}.

\begin{definition}\label{def diff standard basis} 
	Fix a monomial ordering $\prec$ on $\CC[x^{(\infty)}]$ and let $I\triangleleft\CC[x^{(\infty)}]$ be a differential ideal. A subset of polynomials  $G\subseteq I$ s.t. $\langle G\rangle^{(\infty)}=I$ is a {\em differential Gr\"{o}bner basis of $I$} if $\{\partial^k (g)\mid k\in \NN,g\in G\}$ is an algebraic Gr\"{o}bner basis of \mbox{$I\triangleleft \CC[x,x^{(1)},x^{(2)},\ldots]$} with respect to $\prec.$
\end{definition} 
\begin{remark}
	We thank the anonymous referee for pointing us to the related articles~\cite{HS09,NR,KLS} and  Arthur Bik and Aida Maraj for a helpful discussion of those. The setup of $\text{Inc}(\NN)$-stable ideals has parallels to the theory we study. Yet, the results cannot be immediately transferred from one to the other; in differential algebra, one has, for instance, fewer Noetherianity properties at one's disposal and Leibniz' rule instead of multiplicativity, which leads to a structurally different behavior. It would be intriguing to investigate in which extent results from one area transfer to the other.
\end{remark}

Zobnin studied the differential ideal $\langle x^p\rangle^{(\infty)}$ and proved the following.
\begin{theorem}[{\cite{Zobnin}}]\label{thm Zobnin}
The singleton $\{x^p\}$ is a differential Gr\"{o}bner basis of $\langle x^p\rangle^{(\infty)}$ with respect to the reverse lexicographical ordering.
\end{theorem}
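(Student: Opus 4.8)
The plan is to verify Buchberger's criterion for the candidate basis $G\coloneqq\{\partial^{k}(x^{p})\mid k\in\NN\}$ of $\langle x^{p}\rangle^{(\infty)}\triangleleft\CC[x,x^{(1)},x^{(2)},\dots]$: since $G$ generates this ideal and the ideal is homogeneous for the grading $\deg x^{(k)}=1$ — each $\partial^{k}(x^{p})$ being homogeneous of degree $p$, so that any $S$-polynomial reduction stays in a single finite-dimensional graded piece (involving only finitely many of the variables) and therefore terminates — it suffices to show that $S\bigl(\partial^{j}(x^{p}),\partial^{k}(x^{p})\bigr)$ reduces to $0$ modulo $G$ for the reverse lexicographical order, for all $j,k$.

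The first step is to pin down the leading monomials. For an admissible order one has $\lm(\partial g)=\lm(\partial(\lm g))$, a coincidence that persists in characteristic zero because the relevant coefficient is nonzero; since $\lm(\partial M)$ is obtained from a monomial $M$ by lowering its least-index variable by one and raising the next one, starting from $\lm(x^{p})=(x^{(0)})^{p}$ and inducting gives
\[
\lm\bigl(\partial^{k}(x^{p})\bigr)=\bigl(x^{(j)}\bigr)^{p-i}\bigl(x^{(j+1)}\bigr)^{i},\qquad k=jp+i,\ 0\le i<p .
\]
Thus $\init\langle G\rangle$ contains every monomial $\prod_{k}(x^{(k)})^{a_{k}}$ with $a_{j}+a_{j+1}\ge p$ for some $j$; the theorem is the statement that these are all of $\init\langle x^{p}\rangle^{(\infty)}$, i.e.\ that the complementary ``standard'' monomials, those with $a_{j}+a_{j+1}\le p-1$ for all $j$, remain $\CC$-linearly independent modulo $\langle x^{p}\rangle^{(\infty)}$.

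Now I would cut down the $S$-pairs. By the coprimality criterion only pairs whose leading monomials share a variable need checking, and since each leading monomial above is supported on two consecutive derivative indices, the surviving pairs fall into two families: (A) the two leading monomials are supported on a common pair $\{a,a+1\}$ of consecutive indices, and (B) they are supported on $\{a,a+1\}$ and $\{a+1,a+2\}$ and overlap only in $x^{(a+1)}$. In family (A) the chain criterion reduces matters to consecutive pairs $S\bigl(\partial^{\ell}(x^{p}),\partial^{\ell+1}(x^{p})\bigr)$, because for $s<u<t$ in a common block $\lm(\partial^{u}(x^{p}))$ divides the least common multiple of the outer leading monomials. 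The surviving $S$-polynomials are then reduced by an explicit computation whose two workhorses are the Leibniz recursion
\[
\partial^{k}(x^{p})=p\sum_{i\ge0}\binom{k-1}{i}\,\partial^{\,k-1-i}(x^{p-1})\,x^{(i+1)},
\]
which sets up an induction on $p$, and the explicit form of $\init\langle G\rangle$ just obtained, which tells us at each step precisely which monomial of the partial remainder is divisible by some $\lm(\partial^{\ell}(x^{p}))$ and hence removable; one arranges the steps so that the leading monomial strictly decreases among the finitely many monomials $\preceq$ the relevant least common multiple, forcing termination at $0$.

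The hard part is the bookkeeping in family (B): the two leading monomials overlap in a single variable, their least common multiple involves three consecutive derivative indices, and pushing the reduction through requires an induction whose hypothesis must be sharp enough — say, an exact normal form for $\partial^{\ell}(x^{p})$ modulo $\langle\partial^{0}(x^{p}),\dots,\partial^{\ell-1}(x^{p})\rangle$ with explicit control of the coefficients — to actually close. One way to sidestep the full $S$-polynomial analysis: the inclusion $\init\langle x^{p}\rangle^{(\infty)}\supseteq J\coloneqq\langle(x^{(j)})^{p-i}(x^{(j+1)})^{i}\mid j\ge0,\ 0\le i\le p\rangle$ is free, so it is enough to prove directly that the standard monomials of $J$ are $\CC$-linearly independent modulo $\langle x^{p}\rangle^{(\infty)}$; by homogeneity this decouples into one finite-dimensional question per degree, and that is then where the substance of the proof resides.
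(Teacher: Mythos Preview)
The paper does not prove this theorem at all: it is quoted from Zobnin's article and used as a black box, so there is no ``paper's own proof'' to compare against. That said, let me comment on the content of your proposal.

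What you have written is a plan, not a proof. The leading-monomial computation is correct and matches the paper's Lemma~2.3, and your taxonomy of $S$-pairs into the overlapping family~(A) and the adjacent family~(B) is sound. But you explicitly stop short of carrying out the reduction in family~(B), noting only that ``the bookkeeping is hard'' and that one would need ``an induction whose hypothesis must be sharp enough\dots to actually close.'' That is precisely the substance of the theorem; without it you have reformulated the statement rather than proved it. The alternative you float at the end --- show directly that the standard monomials of $J$ are $\CC$-linearly independent modulo $\langle x^p\rangle^{(\infty)}$ --- is a genuine strategy, but you do not execute it either.

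It is worth pointing out that this alternative is exactly the route the paper takes for its own bivariate generalization (Theorem~1.15): there the authors build, following Pogudin, a differential homomorphism $\phi$ from the polynomial ring into an exterior algebra $\Lambda(V_p)$ with $\phi(x^p)=0$, and then show that $\phi$ is injective on the span of the standard monomials by exhibiting, for each standard monomial $K$, a wedge-monomial in $\phi(K)$ that cannot arise from any smaller $K^*$. If you want to turn your sketch into a proof, adapting that exterior-algebra construction to the univariate case (which is strictly easier) is the most direct path, and is essentially what the cited literature does; a head-on Buchberger verification of the family~(B) $S$-pairs is possible but considerably messier.
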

\begin{remark}
Zobnin proved this result for so called  {\em $\beta$-orderings}, i.e., monomial orderings on $\CC[x^{(\infty)}]$ for which the leading monomial of $(x^p)^{(k)}$ is of the form \linebreak$(x^{(i)})^a(x^{(i+1)})^{p-a}$ (see~\cite{Levi}). Since, in this article, we do not need the statement in its full generality, we just point out that the reverse lexicographical ordering is such a \mbox{$\beta$-ordering}. Note moreover that $(x^p)^{(k)}$ is bihomogeneous with respect to the vectors $(1,1,1,\ldots)$ and $(0,1,2,3,\ldots),$ i.e., every monomial summand $\prod_i (x^{(i)})^{a_i}$ in $(x^p)^{(k)}$ satisfies $\sum_i a_i=p$ and $\sum_i ia_i=k.$ 
\end{remark}

\subsection{Two independent variables} 
We now generalize \Cref{prop iso} to two independent variables. We denote by $\CC[x^{(\infty,\infty)}] $ the ring of partial differential polynomials in $x$ over $\CC$, i.e., the differential ring
 $$\CC[x^{(\infty,\infty)}]  \coloneqq  \left( \CC[x^{(k,\ell) }]_{k,\ell \in \NN}, \partial_s,\partial_t\right)$$
in the two independent variables $s,\,t$ and the commuting derivations $\partial_s,\,\partial_t$ acting as
$$ \partial_s(x^{{(k,\ell)}}) =x^{(k+1,\ell)}, \quad \partial_t(x^{(k,\ell)}) =x^{(k,\ell+1)}, \quad \partial_s|_{\CC}\equiv \partial_t|_{\CC}\equiv 0.$$
For $m,n\in \NN,$ denote by $I_{p,(m,n)}$ the differential ideal $\langle x^p,x^{(m,0)},x^{(0,n)}\rangle^{(\infty,\infty)}$ in \linebreak $\CC[x^{(\infty,\infty)}]$ generated by $x^p,x^{(m,0)},$ and $x^{(0,n)}.$

Denote by $R_{m, n}$ the polynomial ring in the $(m+1)(n+1)$ many variables \linebreak$\{x_{k,\ell}\}_{0\leq k \leq m,0\leq \ell \leq n}$ and let  $f_{p,(m,n)}$ be the bivariate polynomial $$f_{p,(m,n)} \coloneqq \left( \sum_{k=0}^{m}\sum_{\ell=0}^n x_{k,\ell}t^ks^{\ell} \right)^p \in R_{m,n}[s,t].$$ 
By the multinomial theorem,
$$ f_{p,(m,n)} = \sum_{\sum i_{k,\ell} = p }\left[ \binom{p}{i_{0,0},\ldots,i_{m,n}} \cdot \prod_{k,\ell}x_{{k,\ell}}^{i_{k,\ell}}s^{k\cdot i_{k,\ell}}\cdot t^{\ell \cdot i_{k,\ell}}\right],$$
where $(k,\ell)\in \{0,\ldots,m\}\times \{0,\ldots n\}$ and $i_{k,\ell}\in \NN$ for all $(k,\ell).$
Let $C_{p,(m,n)}\triangleleft R_{m,n}$ denote the ideal generated by the  the coefficients of $f_{p,(m,n)}.$

\begin{definition}\label{def mnjets}
We refer to points of $\Spec (R_{m,n}/C_{p,(m,n)})$ as {\em $(m,n)$-jets} of the affine monomial scheme defined by $x^p.$
\end{definition}
\begin{proposition}\label{prop iso partial}
	The following map is an isomorphism of $\CC$-algebras:
	$$ R_{m,n}/C_{p,(m,n)}  \stackrel{\cong}{\longrightarrow}\CC[x^{(\infty,\infty)}] /I_{p,(m+1,n+1)}, \quad x_{k,\ell}\mapsto  \frac{1}{k!\ell !}\cdot x^{(k,\ell)}.$$
\end{proposition}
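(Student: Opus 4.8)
The plan is to follow the proof of \Cref{prop iso} essentially verbatim, now running the Leibniz computation with the two commuting derivations $\partial_s,\partial_t$ in place of the single $\partial$. First I would note that, viewed as an ordinary ideal of $\CC[x^{(\infty,\infty)}]$, we have $\langle x^{(m+1,0)},x^{(0,n+1)}\rangle^{(\infty,\infty)} = \langle x^{(k,\ell)} : k\geq m+1 \text{ or } \ell\geq n+1\rangle$, since $\partial_s^a\partial_t^b\,x^{(m+1,0)} = x^{(m+1+a,\,b)}$ and $\partial_s^a\partial_t^b\,x^{(0,n+1)} = x^{(a,\,n+1+b)}$. Hence the quotient of $\CC[x^{(\infty,\infty)}]$ by this ideal is the truncated partial differential ring $\CC[x^{(\leq m,\leq n)}] \coloneqq \CC[x^{(k,\ell)} : 0\leq k\leq m,\ 0\leq\ell\leq n]$, and therefore
$$\CC[x^{(\infty,\infty)}]/I_{p,(m+1,n+1)} \ \cong\ \CC[x^{(\leq m,\leq n)}]\big/ \big\langle \{(x^p)^{(k,\ell)}\}_{k,\ell\in\NN}\big\rangle ,$$
where each $(x^p)^{(k,\ell)}$ is understood modulo the truncation.

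Next I would compute $(x^p)^{(k,\ell)}=\partial_s^k\partial_t^\ell(x^p)$ by applying the generalized Leibniz rule to each derivation in turn, which gives
$$(x^p)^{(k,\ell)} \ =\ \sum_{\substack{a_0+\cdots+a_{p-1}=k\\ b_0+\cdots+b_{p-1}=\ell}} \binom{k}{a_0,\ldots,a_{p-1}}\binom{\ell}{b_0,\ldots,b_{p-1}}\,\prod_{j=0}^{p-1} x^{(a_j,b_j)} .$$
Collecting equal monomials exactly as in \Cref{prop iso} --- writing $i_{k',\ell'}$ for the multiplicity of $(k',\ell')$ in the multiset $\{(a_0,b_0),\ldots,(a_{p-1},b_{p-1})\}$ and using $\prod_j a_j! = \prod_{k',\ell'}(k'!)^{i_{k',\ell'}}$ together with $\prod_j b_j! = \prod_{k',\ell'}(\ell'!)^{i_{k',\ell'}}$ --- this becomes
$$(x^p)^{(k,\ell)} \ =\ \sum_{(i_{k',\ell'})\in I} \binom{p}{i_{0,0},\ldots,i_{m,n}}\cdot \frac{k!\,\ell!}{\prod_{k',\ell'}(k'!\,\ell'!)^{i_{k',\ell'}}}\cdot \prod_{k',\ell'}\big(x^{(k',\ell')}\big)^{i_{k',\ell'}} ,$$
with $I = \{(i_{k',\ell'}) : \sum i_{k',\ell'}=p,\ \sum k'\, i_{k',\ell'}=k,\ \sum \ell'\, i_{k',\ell'}=\ell\}$; passing to the truncated ring simply restricts the sum to those $(i_{k',\ell'})\in I$ supported on $0\leq k'\leq m,\ 0\leq\ell'\leq n$.

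Finally I would introduce the rescaling isomorphism of polynomial rings $\varphi\colon \CC[x^{(\leq m,\leq n)}] \stackrel{\cong}{\longrightarrow} R_{m,n}$, $x^{(k,\ell)}\mapsto k!\,\ell!\cdot x_{k,\ell}$, which is bijective because each $k!\,\ell!$ is a unit in $\CC$. Applying $\varphi$ to the previous display, each factor $(k'!\,\ell'!)^{i_{k',\ell'}}$ cancels the matching term in the denominator, so that
$$\varphi\big((x^p)^{(k,\ell)}\big) \ =\ k!\,\ell!\ \sum_{(i_{k',\ell'})\in I} \binom{p}{i_{0,0},\ldots,i_{m,n}} \prod_{k',\ell'} x_{k',\ell'}^{\,i_{k',\ell'}} ,$$
and the sum on the right is, up to the labeling of $s$ and $t$, one of the coefficients of $f_{p,(m,n)}$ read off from the multinomial expansion recorded before \Cref{def mnjets}. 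As $(k,\ell)$ ranges over $\NN^2$ these coefficients are precisely the generators of $C_{p,(m,n)}$, hence the ring isomorphism $\varphi$ maps the ideal $\langle \{(x^p)^{(k,\ell)}\}\rangle$ of the truncated ring onto $C_{p,(m,n)}$; composing with the isomorphism of the first paragraph yields $R_{m,n}/C_{p,(m,n)}\cong \CC[x^{(\infty,\infty)}]/I_{p,(m+1,n+1)}$, and chasing $x_{k,\ell}$ through the composite recovers the stated map $x_{k,\ell}\mapsto\tfrac{1}{k!\,\ell!}x^{(k,\ell)}$. I do not expect a real conceptual obstacle here, the argument being structurally identical to \Cref{prop iso}; the only genuine work is the two-index bookkeeping in the Leibniz expansion and checking that the scalar $k!\,\ell!\big/\prod_{k',\ell'}(k'!\,\ell'!)^{i_{k',\ell'}}$ cancels exactly under $\varphi$.
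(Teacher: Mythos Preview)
Your proof is correct and follows essentially the same approach as the paper's own proof: both compute $(x^p)^{(k,\ell)}$ via the Leibniz rule, collect equal monomials by multiplicities $i_{k',\ell'}$, and match the result against the coefficients of $f_{p,(m,n)}$ under the rescaling $x^{(k,\ell)}\leftrightarrow k!\,\ell!\,x_{k,\ell}$. The only cosmetic difference is that the paper applies the two derivations sequentially (first $\partial_s^a$, then $\partial_t^b$) and tracks the intermediate multiplicities $j_i^s$, whereas you apply the bivariate Leibniz expansion in one step, which makes the bookkeeping slightly cleaner.
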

\begin{proof}
The coefficient $f_{a,b}$ of $s^at^b$ in $f_{p,(m,n)}$ is given as 
$$ f_{a,b} = \sum_{ (i_{k,\ell} ) \in I } \left[ \binom{p}{i_{0,0},\ldots,i_{m,n}} \cdot \prod_{k,\ell} x_{k,\ell}^{i_{k,\ell}}\right],$$
	where $I = \{ (i_{k,\ell})_{k,\ell} \mid  \sum_{k=0}^m (k\sum_{\ell=0}^n i_{k,\ell})=a,\,\sum_{\ell=0}^n(\ell\sum_{k=0}^m i_{k,\ell}) =b,\, \sum i_{k,\ell}=p\}.$\linebreak
	By the symmetry of the second derivatives, we obtain
\small
\begin{align*} &(x^p)^{(a,b)} = \left((x^p)^{(a,0)}\right)^{(0,b)} \\
&\quad = \left( \sum_{\sum_{i=0}^m k_i=p, k_1+2k_2+\cdots mk_m=a}\binom{p}{k_0,\ldots,k_m} \frac{a!}{(0!)^{k_0}\cdots (m!)^{k_m}} \cdot  (x^{(0,0)})^{k_0} \cdots (x^{(m,0)})^{k_m} \right)^{(0,b)} \\
 &\quad  = \sum_{k_0,\ldots,k_m}\binom{p}{k_0,\ldots,k_m}\frac{a!}{(0!)^{k_0}\cdots(m!)^{k_m}} \sum_{\ell_0+\cdots + \ell_{p-1}=b } \binom{b}{\ell_0,\ldots,\ell_{p-1}} \\ &\quad\qquad 
 \prod_{0\le i\le m } x^{(i,\ell_{k_0+\ldots+k_{i-1}})}\cdots x^{(i,\ell_{k_0+\ldots+k_{i}-1})}.
\end{align*}
\normalsize
For all $0\le i\le m$ and $0\le s\le n,$ let $j_i^s$ be the multiplicity of $s$ in the multiset $\{\ell_{k_0+\cdots +k_{i-1}},\ldots, \ell_{k_0+\cdots +k_{i}-1}\}.$ Thus $k_i=\sum_{s=0}^nj_i^s,$ $\sum_{i,s}j_i^s=p,$  $\sum_{i,s}ij_i^s=a,$ and $\sum_{i,s}sj_i^s=b.$ Let $J$ denote the set of all those $(j_i^s)_{s,i}.$ Then$(x^p)^{(a,b)}$ equals
\small
\begin{align*}
&\sum_{(j_i^s)\in J} \left[ \frac{p!}{k_0!\cdots k_m!}\frac{a!}{(0!)^{k_0}\cdots (m!)^{k_m}}\frac{b!}{(0!)^{\sum j_i^0}\cdots (n!)^{\sum j_i^n}}\frac{k_0!}{j_0^0!\cdots j_0^n!}\cdots \frac{k_m!}{j_m^0!\cdots j_m^n!}\prod_{i,s}\left(x^{(i,s)}\right)^{j_i^s}\right]\\
	&\quad =\sum_{(j_i^s)\in J} \left[\binom{p}{i_{0,0}\cdots i_{m,n}}a!b!\prod_{i,s}\frac{\left(x^{(i,s)}\right)^{j_i^s}}{i!s!}\right],
\end{align*}\normalsize
concluding the proof.
\end{proof}

In order to generalize \Cref{thm Zobnin} to {\em partial} differential rings, we first generalize the concept of $\beta$-orderings. We denote by $\CC[x^{(\leq m,\leq n)}]$ the truncated differential ring $\CC[x^{(\infty,\infty)}]/\langle x^{(m,0)},x^{(0,n)}\rangle^{(\infty,\infty)}.$ A monomial is {\em supported} on a triangle of $T$ if the indices of the variables of that monomial are among the vertices of a triangle of $T.$
\begin{definition}
Fix $m,n,p\in \NN$ and a triangulation $T$ of the $m\times n$ rectangle. A monomial ordering $\prec$ on $\CC[x^{(\leq m,\leq n)}]$ is a \mbox{\em $T$-ordering} if the leading monomial of each $(x^p)^{(k,\ell)},$ $0\le k\le mp,$ $0\le \ell \le np,$ is supported on a triangle of~$T.$
\end{definition}

\begin{remark}
By identifying $x^{(k,\ell)}$ with $k!\ell!x_{k,\ell},$ one equivalently defines a \mbox{\em $T$-ordering}  as a monomial ordering on  $R_{m,n}=\CC [\{x_{k,\ell}\}_{0\leq k\leq m,0\leq \ell \leq n}]$  such that the leading monomial of each coefficient of $f_{p,(m,n)}\in R_{m,n}[s,t]$ is supported on a triangle of~$T.$
\end{remark}
Denote by $T_{m,2}$ the unimodular triangulation of the $m\times 2$ rectangle depicted in \Cref{fig reg triang m2}.
\begin{figure}
	$\begin{tikzpicture}[scale=1.2]
	\filldraw (0,0) circle (2pt) node[below, scale=0.8] {$(0,0)$};
	\filldraw (1,0) circle (2pt) node[below, scale=0.8] {$(1,0)$};
	\filldraw (2,0) circle (2pt) node[below, scale=0.8] {$(2,0)$};
	\filldraw (0,1) circle (2pt) node[left, scale=0.8] {$(0,1)$};
	\filldraw (0,2) circle (2pt) node[above, scale=0.8] {$(0,2)$};
	\filldraw (1,1) circle (2pt);
	\filldraw (1,2) circle (2pt) node[above, scale=0.8] {$(1,2)$};
	\filldraw (2,1) circle (2pt);
	\filldraw (2,2) circle (2pt) node[above, scale=0.8] {$(2,2)$};
	\filldraw (3,1) circle (2pt);
	\filldraw (3,2) circle (2pt) node[above, scale=0.8] {$(3,2)$};
	\filldraw (4,1) circle (2pt);
	\filldraw (4,2) circle (2pt);		
	\draw(4,2.05)  node[above, scale=0.8] {$\cdots$};
	\filldraw (5,1) circle (2pt) node[right, scale=0.8] {$(m,1)$};
	\filldraw (5,2) circle (2pt) node[above, scale=0.8] {$(m,2)$};
	\filldraw (3,0) circle (2pt) node[below, scale=0.8] {$(3,0)$};
	\filldraw (4,0) circle (2pt);
	\draw (4,-0.1)  node[below, scale=0.8] {$\cdots$};
	\filldraw (5,0) circle (2pt) node[below, scale=0.8] {$(m,0)$};
	\draw (0,0)--(5,0);
	\draw (0,0)--(0,2);
	\draw  (1,0)--(0,1);
	\draw  (0,2)--(0,1);
	\draw  (2,0)--(2,2);
	\draw  (0,2)--(5,2);
	\draw (0,2)--(2,0);
	\draw (1,0)--(1,2);
	\draw  (1,0)--(0,2);
	\draw  (2,1)--(1,2);
	\draw (1,2)--(2,0);
	\draw  (3,0)--(3,2);
	\draw  (4,0)--(4,2);
	\draw  (5,0)--(5,2);
	\draw (2,2)--(3,0);
	\draw (3,2)--(4,0);
	\draw (4,2)--(5,1);
	\draw (2,2)--(3,1);
	\draw (4,2)--(5,0);
	\draw (3,2)--(4,1);
	\draw (2,1)--(3,0);
	\draw (3,1)--(4,0);
	\draw (4,1)--(5,0);
	\end{tikzpicture}$
	\caption{\small The placing triangulation $T_{m,2}$ of the $m \times 2$ rectangle of the point configuration $[(0,0),(0,1),(0,2),(1,0),(1,1),(1,2), \ldots, (m,0),(m,1),(m,2)].$\normalsize}
	\label{fig reg triang m2}
\end{figure}
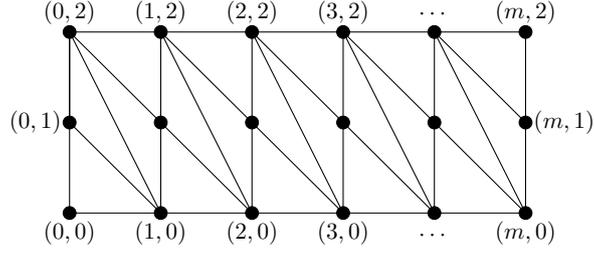 
This is the placing triangulation of the point configuration $$[(0,0),(0,1),(0,2),(1,0),(1,1),(1,2),\ldots,(m,0),(m,1),(m,2)].$$
Note that the vector $(1,2,2^2,\ldots, 2^{3m+2})$ induces the triangulation $T_{m,2}$ in the {\em lower} hull convention, hence $T_{m,2}$ is a regular triangulation. Denote by $T_{m,n}$ the placing triangulation of $[(0,0),\ldots,(0,n),(1,0),\ldots,(1,n),\ldots,(m,0),\ldots,(m,n)].$ It consists of $m$ copies of the triangulation in \Cref{fig reg1n}.
	\begin{figure}[h]
	\floatbox[{\capbeside\thisfloatsetup{capbesideposition={right, center},capbesidewidth=11cm}}]{figure}[\FBwidth]
	{\caption{\small The regular placing triangulation $T_{1,n}$ of the $1 \times n$ rectangle for the point configuration $[(0,0),(0,1),\ldots,(0,n),(1,0),\ldots,(1,n)].$\normalsize}	\label{fig reg1n}}{
		$\hspace*{1cm}\begin{tikzpicture}[scale=1.2]
		\filldraw (0,0) circle (2pt) node[left, scale=0.8] {$(0,0)$};
		\filldraw (1,0) circle (2pt) node[right, scale=0.8] {$(1,0)$};
		\filldraw (0,1) circle (2pt) node[left, scale=0.8] {$(0,1)$};
		\filldraw (0,2) circle (2pt) node[left, scale=0.8] {$(0,2)$};
		\filldraw (0,3) circle (2pt);
		\draw (-0.2,3)  node[left, scale=0.8] {\vdots};
		\filldraw (0,4) circle (2pt) node[left, scale=0.8] {$(0,n)$};
		\filldraw (1,4) circle (2pt) node[right, scale=0.8] {$(1,n)$};
		\filldraw (1,3) circle (2pt);	
		\draw (1.2,3)  node[right, scale=0.8] {\vdots};
		\filldraw (1,1) circle (2pt) node[right, scale=0.8] {$(1,1)$};
		\filldraw (1,2) circle (2pt) node[right, scale=0.8] {$(1,2)$};
		\draw (1,0)--(1,4);
		\draw (0,0)--(1,0);
		\draw (1,0)--(0,4);
		\draw (1,0)--(0,3);
		\draw (0,4)--(1,4);
		\draw (0,4)--(1,1);
		\draw (0,4)--(1,2);
		\draw (0,4)--(1,4);
		\draw (0,0)--(0,4);
		\draw  (1,0)--(0,1);
		\draw  (0,4)--(0,1);
		\draw  (0,3)--(0,1);
		\draw  (1,0)--(0,2);
		\draw  (0,4)--(1,0);
		\draw  (0,4)--(1,0);
		\draw  (0,4)--(1,3);
		\end{tikzpicture}\hspace*{-1cm}$}
	\end{figure}
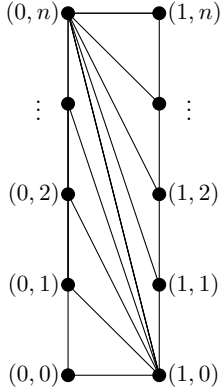
\begin{proposition}\label{prop Tm2ord}
For all $0\le k\le mp,$ and $0\le \ell\le np,$ $(x^p)^{(k,\ell)}$ has a unique monomial summand supported on a triangle of $T_{m,n}.$ Moreover, the reverse lexicographical ordering~$\prec$ on $\CC[x^{(0,0)},x^{(0,1)},\ldots,x^{(0,n)},\ldots,x^{(m,0)},\ldots,x^{(m,n)}]$ is a $T$-ordering for $T=T_{m,n}$ for all $p,$ where we order the variables as $x^{(0,0)}\prec x^{(0,1)}\prec \cdots\prec x^{(0,n)}\prec x^{(1,	0)}\prec \cdots \prec x^{(1,n)} \prec \cdots \prec x^{(m,0)} \prec \cdots \prec x^{(m,n)}.$
\end{proposition}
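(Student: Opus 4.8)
The plan is to compute $\lm\bigl((x^p)^{(k,\ell)}\bigr)$ explicitly, to recognise it as a monomial supported on a triangle of $T_{m,n}$, and to deduce uniqueness of such a monomial from the unimodularity of the triangulation. Here a monomial is \emph{supported on a triangle of $T_{m,n}$} if its support is contained in the vertex set of a (possibly lower-dimensional) face of $T_{m,n}$.

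I would first isolate an elementary lemma: if $\prec$ is admissible for a derivation $\partial\in\{\partial_s,\partial_t\}$ and $f$ has leading monomial $M\neq 1$ with $\partial M\neq 0$, then $\lm(\partial f)=\lm(\partial M)=(M/v)\cdot\partial(v)$, where $v$ is the $\prec$-smallest variable occurring in $M$ with $\partial v\neq 0$. Indeed, $(M/v)\partial(v)$ occurs in $\partial M$ with positive coefficient; every monomial of $\partial M$ agrees with $M$ on all variables $\prec v$, and $(M/v)\partial(v)$ is the only monomial of $\partial M$ whose $v$-exponent drops below that of $M$, so it is $\succ$ all other monomials of $\partial M$; and for every monomial $M'\prec M$ of $f$, axiom~(iv) gives $\lm(\partial M')\prec\lm(\partial M)$, so no cancellation destroys this term. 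Since the reverse lexicographical ordering $\prec$ with the stated variable order is admissible for both $\partial_s$ and $\partial_t$ (verified as in the Example), iterating the lemma along $(x^p)^{(k,\ell)}=(\partial_t)^{\ell}(\partial_s)^{k}(x^{(0,0)})^{p}$ reduces the computation to tracking one monomial under the rule ``replace the $\prec$-smallest variable with nonzero derivative by its derivative''.

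Carrying this out, the $k$ applications of $\partial_s$ push the exponent through $x^{(0,0)}\to x^{(1,0)}\to\cdots$ and produce $(x^{(c,0)})^{\alpha}(x^{(c+1,0)})^{\beta}$ with $c=\lfloor k/p\rfloor$, $\alpha=(c+1)p-k$, $\beta=k-cp$ (this re-proves the relevant case of Zobnin's theorem). Then the $\ell$ applications of $\partial_t$ first move the $x^{(c,\cdot)}$-part up one second index at a time, since each $x^{(c,j)}$ is $\prec$-smaller than $x^{(c+1,0)}$, until it reaches the truncation wall $x^{(c,n)}$, where $\partial_t$ annihilates it and the $x^{(c+1,\cdot)}$-part begins to move. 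Writing $\ell=j\alpha+r$ with $0\le r<\alpha$ when $\ell<n\alpha$, resp. $\ell-n\alpha=j'\beta+r'$ with $0\le r'<\beta$ when $\ell\ge n\alpha$, one finds that $\lm\bigl((x^p)^{(k,\ell)}\bigr)$ is supported on $\{(c,j),(c,j+1),(c+1,0)\}$ in the first case and on $\{(c,n),(c+1,j'),(c+1,j'+1)\}$ in the second (with the obvious degenerations when $\alpha=0$, $\beta=0$, or an exponent vanishes). By the description of $T_{m,n}$ as $m$ shifted copies of $T_{1,n}$ (a fan from $(c+1,0)$ over $\{c\}\times[0,n]$ together with a fan from $(c,n)$ over $\{c+1\}\times[0,n]$), these sets are exactly the triangles $\{(c+1,0),(c,i),(c,i+1)\}$ and $\{(c,n),(c+1,j),(c+1,j+1)\}$ of $T_{m,n}$, or faces of them. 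Hence $\lm\bigl((x^p)^{(k,\ell)}\bigr)$ is supported on a triangle of $T_{m,n}$; this shows at once that such a monomial summand exists and that $\prec$ is a $T_{m,n}$-ordering.

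For uniqueness, suppose $\prod(x^{(k',\ell')})^{a_{k',\ell'}}$ occurs in $(x^p)^{(k,\ell)}$ and is supported on a face $F$ of $T_{m,n}$. Then $(k,\ell)=\sum a_{k',\ell'}(k',\ell')$ with $\sum a_{k',\ell'}=p$ and $a_v>0$ for $v\in F$, so $(k,\ell)/p$ lies in the relative interior of $\conv(F)$; this forces $F$ to be the minimal face of $T_{m,n}$ containing $(k,\ell)/p$. Since $T_{m,n}$ is unimodular, $\{(v,1):v\in F\}$ extends to a $\mathbb{Z}$-basis of $\mathbb{Z}^{3}$, so the coefficients $(a_v)_{v\in F}$ solving $\sum_{v\in F}a_v(v,1)=(k,\ell,p)$ are uniquely determined; hence there is at most one such monomial, and it must coincide with $\lm\bigl((x^p)^{(k,\ell)}\bigr)$. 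The main obstacle is the combinatorial bookkeeping in the third paragraph: correctly following, through the degenerate cases and the interaction with the truncation $\CC[x^{(\le m,\le n)}]$, which face of the placing triangulation the leading monomial lands in, and matching it against the explicit combinatorics of $T_{m,n}$.
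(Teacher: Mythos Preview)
Your strategy differs substantially from the paper's. The paper argues directly: it fixes a hypothetical triangle-supported monomial in $(x^p)^{(k,\ell)}$, shows by manipulating the three bihomogeneity relations that no monomial of $(x^p)^{(k,\ell)}$ can be strictly $\succ$ it (hence, if it exists, it is the leading monomial and is unique), and then obtains existence by a global count, checking that the number of degree-$p$ monomials supported on faces of $T_{m,n}$ equals $(mp+1)(np+1)$. Your uniqueness argument via the carrier face of $(k,\ell)/p$ and unimodularity of $T_{m,n}$ is correct and is more conceptual than the paper's counting; in fact it simultaneously yields existence of a triangle-supported monomial without any further work. What remains is only to show that this monomial is the leading one, and here your route and the paper's diverge.

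There is, however, a genuine gap in your iterative computation. You invoke admissibility of $\prec$ for $\partial_t$ ``as in the Example'', but that Example treats the \emph{untruncated} ring, and in the truncated ring $\CC[x^{(\le m,\le n)}]$ axiom~(iv) fails for $\partial_t$. For instance, with $n=2$ take $M_1=x_{0,1}x_{1,2}$ and $M_2=x_{0,2}x_{1,0}$: then $M_1\prec M_2$, yet $\lm(\partial_t M_1)=x_{0,2}x_{1,2}\succ x_{0,2}x_{1,1}=\lm(\partial_t M_2)$. Thus the sentence ``axiom~(iv) gives $\lm(\partial M')\prec\lm(\partial M)$'' is not justified once you are past the truncation wall. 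Your computation still arrives at the correct leading monomial, but to make the iteration rigorous you would need an additional argument---for example, proving a bihomogeneous (non-strict) version of axiom~(iv) restricted to monomials of the same triple degree $(p,k,\ell)$ and combining it with the positivity of all coefficients in $(x^p)^{(k,\ell)}$ to rule out cancellation. The paper sidesteps this entirely by comparing the triangle-supported monomial directly against an arbitrary competitor using only the linear constraints $\sum a_{i,j}=p$, $\sum i\,a_{i,j}=k$, $\sum j\,a_{i,j}=\ell$, which is both shorter and avoids any appeal to admissibility in the truncated setting.
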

\begin{proof}
Consider $(x^p)^{(k,\ell)}$ and let us suppose that it has a monomial summand supported on a triangle of $T$  of the form $x_{h,n}^ax_{h+1,s}^bx_{h+1,s+1}^c.$ Suppose that there exists a monomial $M=\prod_j x_{j,0}^{i_{j,0}}\cdots x_{j,n}^{i_{j,n}}$ in $(x^p)^{(k,\ell)}$ such that $x_{h,n}^ax_{h+1,s}^bx_{h+1,s+1}^c \prec M.$ Then, it follows that  $i_{h,n} \le a,$ $i_{h,0}=\cdots =i_{h,n-1}=0,$  and  $i_{j,0}=\cdots=i_{j,n}=0$ for all $j<h.$ Moreover, the following identities hold:
\begin{align}\label{3equalities}
\begin{split}
a+b+c=\sum_{j\ge h} i_{j,0}+\cdots +i_{j,n}=p,\\
ha+(h+1)b+(h+1)c=\sum_{j\ge h}j(i_{j,0}+\cdots +i_{j,n})=k,\\
na+sb+(s+1)c=\sum_{j\ge h}i_{j,1}+\cdots +ni_{j,n}=\ell.
\end{split}
\end{align}
Then from the first two lines in~\eqref{3equalities}, we obtain
\small \[(a-i_{h,n})+(h+1)\big(\sum_{j\ge h}(i_{j,0}+\cdots+i_{j,n})-p\big)+\sum_{j\ge h+2}(j-h-1)(i_{j,0}+\cdots+i_{j,n})=0.\] \normalsize
Note that each of the three summands on the left hand side is nonnegative. Thus $$M=x_{h,n}^{i_{h,n}}x_{h+1,0}^{i_{h+1,0}}\cdots x_{h+1,n}^{i_{h+1,n}}, \ i_{h,n}=a, \text{ and } i_{h+1,0}+\ldots+i_{h+1,n}=b+c.$$ Since  $x_{h,n}^ax_{h+1,s}^bx_{h+1,s+1}^c \prec M,$  we have $i_{h+1,s}\le b,$ and for all $r<s, i_{h+1,r}=0.$ Then  from the third line in~\eqref{3equalities} we have $$s(b+c)+c=s(i_{h+1,s}+\cdots +i_{h+1,n})+i_{h+1,s+1}+\cdots+(n-s)i_{h+1,n}.$$ Thus $b=i_{h+1,s}$ and $c=i_{h+1,s+1}.$ We conclude that $M=x_{h,n}^ax_{h+1,s}^bx_{h+1,s+1}^c.$ 

Now suppose there exists a  monomial summand of $(x^p)^{(k,\ell)}$ that is supported on a triangle of $T_{m,n}$ of the form $x_{h,s}^ax_{h,s+1}^bx_{h+1,0}^c$ and suppose that  there exists a monomial $M$ such that $x_{h,s}^ax_{h,s+1}^bx_{h+1,0}^c \prec M.$ Then  $i_{h,s}\le a,$ $i_{h,r}=0$  for all $r<s,$ and
\begin{align}\label{3equations2}
\begin{split}
a+b+c=\sum_{j\ge h} i_{j,0}+\cdots+i_{j,n}=p,\\
ha+hb+(h+1)c=\sum_{j\ge h}j(i_{j,0}+\cdots+i_{j,n})=k,\\
sa+(s+1)b=\sum_{j\ge h}i_{j,1}+\cdots+ni_{j,n}=\ell.
\end{split}\end{align}
Suppose $a+b<i_{h,s}+\cdots+i_{h,n}.$ Then $b<i_{h,s+1}+\cdots+i_{h,n}.$ By the third line in~\eqref{3equations2},
\begin{equation}\label{3rdline}
s(a+b)+b=s(i_{h,s}+\cdots+i_{h,n})+(i_{h,s+1}+\cdots+(n-s)i_{h,n})+\sum_{j\ge h+1}i_{j,1}+\cdots+ni_{j,n},
\end{equation}
which is a contradiction to our assumption. 
From the first two lines in~\eqref{3equations2} we obtain
\footnotesize
\[(a+b-(i_{h,s}+\cdots+i_{h,n}))+(h+1)\left(\sum_{j\ge h}(i_{j,0}+\cdots+i_{j,n})-p\right)+\sum_{j\ge h+2}(j-h-1)(i_{j,0}+\cdots+i_{j,n})=0.\] \normalsize
Thus $a+b=i_{h,s}+\cdots+i_{h,n},$ and $c=i_{h+1,0}+\cdots+i_{h+1,n}.$
Therefore, from \eqref{3rdline} we conclude that $a=i_{h,s}, b=i_{h,s+1},$ and $c=i_{h+1,0}$ which means $M=x_{h,s}^ax_{h,s+1}^bx_{h+1,0}^c.$

We have proved that if $(x^p)^{(k,\ell)}$ contains a monomial summand supported on a triangle of~$T_{m,n},$ then this monomial is its leading monomial. Hence for all  $0\le k\le mp,$ $0\le \ell \le np,$ $(x^p)^{(k,\ell)}$ has at most one monomial summand that is supported on a triangle of $T_{m,n}.$
The triangles of $T_{m,n}$ are given by $\{(j,n),(j+1,s),(j+1,s+1)\}$ and $\{(j+1,0),(j,s),(j,s+1)\}$ for $0\le j\le m-1$ and $s=0, \ldots,n-1.$ The number of monomials of degree $p$ that are supported on these triangles is $(mp+1)(np+1).$ Indeed, we have $2nm$ triangles, $(3n+1)m+n$ edges, and $(n+1)(m+1)$ vertices on~$T_{m,n}.$ The number of monomials which can be formed by  the $2nm$ triangles containing all three corresponding variables  is $2nm\cdot\#\{a+b+c=p \, |\, a,b,c>0\}= 2nm\tfrac{(p-1)(p-2)}{2}.$ The edges give rise to $((3n+1)m+n)(p-1)$ monomials of degree $p$ in which both variables appear. The vertices give rise to $(n+1)(m+1)$ monomials of degree $p$ containing only this variable. Thus, $2nm\tfrac{(p-1)(p-2)}{2}+((3n+1)m+n)(p-1)+(n+1)(m+1)=(np+1)(mp+1)$ monomials are supported on triangles of $T_{m, n}.$ Each of these monomials belongs to the set of monomials appearing in the expression of $(x^p)^{(k,\ell)} $ for some $0\le k\le mp$ and $0\le \ell \le np.$  We conclude that every $(x^p)^{(k,\ell)}$ has exactly one monomial that is supported on a triangle of $T_{m,n}$ and this monomial is its leading monomial.
\end{proof}
We generalize Zobnin's result to the case of two independent variables as follows.
\begin{theorem}\label{thm zobnin m2}
For all $m,p\in \NN ,$  $\{(x^p)^{(k,\ell)}\}_{0\le k\le mp, 0\le \ell \le 2p}$ is a Gr\"obner basis of the differential ideal generated by  $ x^p $  in the truncated partial differential ring $\CC[x^{(\le m,\le 2)}]$ with respect to any $T_{m,2}$-ordering.
	\end{theorem}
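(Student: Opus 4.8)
The plan is to verify Buchberger's criterion for the finite set
$G=\{g_{k,\ell}:=(x^p)^{(k,\ell)}\mid 0\le k\le mp,\ 0\le\ell\le 2p\}$.
Since every monomial of $(x^p)^{(k,\ell)}$ has the shape $\prod_v v^{a_v}$ with $\sum_v a_v=p$ and $\sum_v(\text{$s$-coordinate of }v)\,a_v=k$, the image of $(x^p)^{(k,\ell)}$ in $\CC[x^{(\le m,\le 2)}]$ vanishes as soon as $k>mp$ or $\ell>2p$; hence $G$ is exactly the set of nonzero generators of the differential ideal $I$ generated by $x^p$ in the truncated ring, and it suffices to prove that $G$ is an algebraic Gr\"obner basis of $I$ with respect to an arbitrary $T_{m,2}$-ordering $\prec$. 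By \Cref{prop Tm2ord} with $n=2$, for every such $\prec$ the leading monomial $M_{k,\ell}:=\lm(g_{k,\ell})$ is the unique monomial of $g_{k,\ell}$ supported on a face of $T_{m,2}$, and, by the point count in the proof of that proposition, $(k,\ell)\mapsto M_{k,\ell}$ is a bijection onto the set of all degree-$p$ monomials supported on faces of $T_{m,2}$. In particular the set of leading monomials --- hence the collection of $S$-pairs that must be examined --- is the same for every $T_{m,2}$-ordering.

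The first reduction is Buchberger's product criterion: whenever $M_{k_1,\ell_1}$ and $M_{k_2,\ell_2}$ are coprime, $S(g_{k_1,\ell_1},g_{k_2,\ell_2})$ reduces to $0$ modulo $G$ automatically. Two face monomials of $T_{m,2}$ share a variable only when their supporting faces share a vertex; since every face of $T_{m,2}$ lies in a single strip $[j,j+1]\times[0,2]$ and a vertex in column $j$ meets triangles only in the strips $[j-1,j]\times[0,2]$ and $[j,j+1]\times[0,2]$, this can happen only for two faces inside the same strip or inside two consecutive strips. Because the combinatorial type of $T_{m,2}$ is the same in every strip --- the four triangles $\{(j,2),(j+1,0),(j+1,1)\}$, $\{(j,2),(j+1,1),(j+1,2)\}$, $\{(j+1,0),(j,0),(j,1)\}$, $\{(j+1,0),(j,1),(j,2)\}$ together with their sub-faces --- one is left, up to the shift $j\mapsto j+1$, with finitely many combinatorial types of overlapping $S$-pairs, each type contributing a family of $S$-polynomials indexed by the way the degree $p$ is distributed over the vertices of the faces involved. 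This is the point where the hypothesis $n=2$ enters: for larger $n$ the per-strip structure is considerably more involved.

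The core of the argument is to show that the $S$-polynomials of every overlapping type reduce to $0$ modulo $G$, uniformly in $m$ and $p$. The basic tool is that $G$ is stable under the derivations, $\partial_s g_{k,\ell}=g_{k+1,\ell}$ and $\partial_t g_{k,\ell}=g_{k,\ell+1}$ with the convention $g_{k,\ell}=0$ for $k>mp$ or $\ell>2p$; thus all of $G$, as well as all the relevant leading terms, is produced from $x^p$ by differentiation, and the bihomogeneity of $(x^p)^{(k,\ell)}$ for the three weightings $\sum_v a_v=p$, $\sum_v(\text{$s$-coord})\,a_v=k$, $\sum_v(\text{$t$-coord})\,a_v=\ell$ sharply restricts which $g_{k',\ell'}$ can occur in a standard representation of a given $S$-polynomial. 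Two complementary routes look workable: (i) a direct induction on the weight $k+\ell$, reducing each $S$-polynomial step by step while checking that every intermediate remainder is again a $\CC$-linear combination of face monomials, so that the process is well-founded and terminates at $0$; or (ii) a reduction to Zobnin's theorem, using $(x^p)^{(k,\ell)}=\bigl((x^p)^{(k,0)}\bigr)^{(0,\ell)}$ and, inside a strip $[j,j+1]\times[0,2]$, introducing the auxiliary univariate variable $z=x^{(j,0)}+u\,x^{(j+1,0)}$ over $\CC[u]$ with $z^{(r)}=x^{(j,r)}+u\,x^{(j+1,r)}$: the generators whose leading terms lie on the faces of that strip become the coefficients in $u$ of the derivatives $(z^p)^{(\ell)}$ of $z^p$ in the truncated univariate differential ring $\CC[z,z^{(1)},z^{(2)}]$, whose Gr\"obner property is exactly \Cref{thm Zobnin}, and transporting those reductions back should settle all within-strip $S$-pairs.

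The step I expect to be the main obstacle is the treatment of the cross-strip $S$-polynomials --- for instance between a generator whose leading term lies on $\{(j,0),(j,1),(j+1,0)\}$ and one whose leading term lies on $\{(j-1,2),(j,0),(j,1)\}$ --- because these reductions straddle two strips and are not directly covered by the univariate result; making the corresponding bookkeeping go through uniformly in $p$ is where I anticipate the real difficulty. Should that prove too delicate, one can sidestep Buchberger altogether: prove directly that the standard monomials of the ideal generated by the degree-$p$ face monomials of $T_{m,2}$ --- equivalently, via \Cref{prop iso partial}, the lattice points of $(p-1)P_{(m,2)}$ --- are $\CC$-linearly independent modulo $C_{p,(m,2)}$; combined with the inclusion $\langle\,M_{k,\ell}\,\rangle\subseteq\init_\prec(I)$ supplied by \Cref{prop Tm2ord}, this forces $\init_\prec(I)=\langle\,M_{k,\ell}\,\rangle$, hence the Gr\"obner property, and at the same time it reproves \Cref{prop dimmultivar}.
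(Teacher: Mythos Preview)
Your write-up is a strategy outline rather than a proof: neither route (i) nor (ii) is carried out, and you explicitly flag the cross-strip $S$-pairs as an unresolved obstacle. The Buchberger approach is \emph{not} how the paper proceeds, and I do not see how your route (ii) can be made to work as stated: the substitution $z=x^{(j,0)}+u\,x^{(j+1,0)}$ does not turn the strip generators into the $u$-coefficients of $(z^p)^{(\ell)}$ in any way that respects the bihomogeneity constraints (the $s$-degree already fixes how many factors come from column $j+1$, so there is no free parameter $u$ to collect), and even if it did, Zobnin's univariate result applied fibre-by-fibre says nothing about reductions that mix two strips.

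Your fallback --- forget Buchberger, and instead show directly that the monomials \emph{not} divisible by any $M_{k,\ell}$ are $\CC$-linearly independent modulo the ideal --- is exactly what the paper does, and it is the right thing to aim for. What you are missing is the mechanism. The paper constructs a differential ring homomorphism
\[
\phi\colon \CC[x^{(\le m,\le 2)}]\longrightarrow \Lambda(V_p),\qquad \phi(x)=\sum_{r=0}^{p-2}\eta_{0,0}^{r}\wedge\xi_{0,0}^{r},
\]
into an exterior algebra on $2(p-1)$ countable families of vectors $\eta_{i,s}^{r},\xi_{i,s}^{r}$, following Pogudin. Since each summand squares to zero, $\phi(x^p)=0$, so $\phi$ kills the whole differential ideal. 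The substance is then a combinatorial claim: if $K$ is a standard monomial (one avoiding every $M_{k,\ell}$), its variable multiset can be partitioned into at most $p-1$ blocks, each block containing no repeated variable and no two variables joined by an edge of $T_{m,2}$. This partition is proved by induction on $m$ and uses $n=2$ in an essential way (the case analysis on the three rows is what makes the bound $p-1$ achievable). From such a partition one writes down, block by block, an explicit wedge monomial $\alpha$ in $\phi(K)$ and checks that $\alpha$ cannot arise from $\phi(K')$ for any other standard monomial $K'$; hence $\phi$ is injective on the span of standard monomials, which gives the desired linear independence.

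So the gap in your proposal is precisely this exterior-algebra separating map and the block-partition lemma that feeds it; without some device of this kind, the linear-independence statement in your fallback is just a restatement of the theorem.
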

\begin{proof}
	We pick up and generalize the construction from the proof of~\cite[Lemma~$1$]{Pog14}. Consider a $\CC$-vector space $V_p$ with $p-1$ pairs of countable series of basis vectors $\xi_{i,s}^r$ and  $\eta_{i,s}^r,$ where $r=0,\ldots,p-2,$ and $i,s\in \NN.$  Let $\Lambda(V_p)$ denote the exterior algebra of~$V_p.$ The two derivations increase the first (resp. second) index of the variables $\xi^r_{i,s}$ and $\eta^r_{i,s}$ by one. Extending this rule by Leibniz' rule endows $\Lambda(V_p)$ with the structure of a differential algebra. Denote by $M$ the set of monomials that are not divisible by any monomial of the form $\lm_\prec((x^p)^{(k,\ell)}).$ We will  prove that $M$ is a basis of $\CC[x^{(\infty,\infty)}]/I_{p,(m+1,3)}.$ Let $\phi:  \mathbb{C}[x^{(\le m,\le 2)}] \to \Lambda(V_p) $ be the differential homomorphism such that $\phi(x)=\sum_r \eta_{0,0}^r\wedge \xi_{0,0}^r$ and let $L=\sum_{K\in M} c_K K$ be a $\CC$-linear combination of elements of~$M.$ Notice that $\phi(x^p)=0.$ Hence we only need to prove that $\phi(L)\neq 0.$  Let $K=x_{i_1,s_1}x_{i_2,s_2}\cdots x_{i_k,s_k}$ be the leading monomial of $L$ w.r.t. the degrevlex~ordering.  
	\begin{claim}
	The variables $x_{i_1,s_1},x_{i_2,s_2},\ldots, x_{i_k,s_k}$   appearing  in the monomial $K$ can be distributed to sets $S_0,\ldots,S_\nu$ with $\nu \leq p-1$ such that each $S_r$ contains neither a repeated variable nor two variables that are supported on an edge  of $T_{m,2}.$ 
		\end{claim} 
	\begin{proof}
		We prove the claim by induction over $m.$ For $m=0,$ suppose  $K=x_{0,0}^ax_{0,1}^bx_{0,2}^c.$ Since $K\in M,$ we have $a+b\le p-1$ and $b+c\le p-1.$  In this case we can construct $\max\{a,c\}+b\le p-1$ sets $S_r$ that satisfy the hypothesis of the claim.  Suppose that the claim is true for some $m,$ and $K=K'x_{m,0}^ax_{m,1}^bx_{m,2}^cx_{m+1,0}^{a'}x_{m+1,1}^{b'}x_{m+1,2}^{c'}$ where $K'\in \CC[x^{(\le m-1,\le2)}].$ By the induction hypothesis, there exist $\nu\le p-1$ sets $S_0,\ldots,S_{\nu-1}$  satisfying the hypothesis of the claim for the variables appearing in $K'x_{m,0}^ax_{m,1}^bx_{m,2}^c$ considered with their powers. If we suppose that $a\le c,$ then we can assume that for every set $S_r$ that contains $x_{m,2},$ also $x_{m,0}$ is contained in $S_r$; otherwise we bring $x_{m,0}$ from another set to $S_r$ and this will not change the properties of the sets $S_0,\ldots ,S_{\nu-1}.$ Similarly, if $c \le a,$ we can assume that for every set $S_r$ that contains $x_{m, 0},$ also $x_{m, 2}$ is contained in that set.  Let us take the $a'$ copies of $x_{m+1,0}$ and distribute them over the sets that do not contain $x_{m,0},x_{m,1},$ or $x_{m,2}.$ If the number of these sets is less than $a',$ construct new sets containing only the single element $x_{m+1,0}.$ 	Since $a'+b+\max\{a,c\}\le p-1,$ the total number of the sets $S_r$ is still $p-1$ at most. For the $b'$ copies of $x_{m+1,1},$ let us distribute them over the sets that do not contain $x_{m,2}$ nor $x_{m+1,0}.$ If the total number of these latter sets is smaller than $b',$ then construct new sets containing the single element $x_{m+1,1}.$ Since $a'+b'+c\le p-1,$ the total number of the sets $S_r$ is still $ p-1$ at most. We apply the same argumentation to~$c',$ concluding the proof.
	\end{proof}
	Let us fix $S_r=\{x_{i_1',s_1'},\ldots ,x_{i_{n_r}',s_{n_r}'}\}$ such that $x_{i_1',s_1'}\prec x_{i_2',s_2'}\prec \cdots \prec x_{i_{n_r}',s_{n_r}'}.$ Therefore, the elements of $S_r$ satisfy the following:
	\[(i'_{j+1}-i'_j>1) \text{ or } (i'_{j+1}=i'_j+1\, \land \,s'_j\neq 2\, \land\, s'_{j+1}\neq 0) \text{ or }(i'_{j+1}=i'_j\, \land\, s'_j=0\, \land\, s'_{j+1}=2).\]
	We claim that $\phi(x_{i_1',s_1'} x_{i_2',s_2'}\cdots x_{i_{n_r}',s_{n_r}'})$ contains a monomial summand $\alpha_r$ of the form 
	\[\alpha_r=(\eta_{a_1,b_1}^r\wedge \xi_{c_1,d_1}^r)\wedge (\eta_{a_2,b_2}^r\wedge \xi_{c_2,d_2}^r)\wedge\cdots \wedge (\eta_{a_{n_r},b_{n_r}}^r\wedge \xi_{c_{n_r},d_{n_r}}^r),\]
	where $x_{a_j,b_j}\prec x_{a_{j+1},b_{j+1}} $ and $x_{c_j,d_j}\prec x_{c_{j+1},d_{j+1}}.$ Moreover, $a_j+c_j=i'_j$ and $b_j+d_j=s'_j.$ Let $(a_1,b_1)=(i'_1,s'_1)$ and $(c_1,d_1)=(0,0).$ Suppose that  $(\eta_{a_1,b_1}^r\wedge \xi_{c_1,d_1}^r)\wedge\cdots \wedge (\eta_{a_\ell,b_\ell}^r\wedge \xi_{c_\ell,d_\ell}^r)$ is constructed and let us construct $\eta_{a_{\ell+1},b_{\ell+1}}^r\wedge \xi_{c_{\ell+1},d_{\ell+1}}^r.$
	\begin{enumerate}[{Case 1.}]
		\item[Case 1.] If $i'_{\ell+1}-i'_\ell>1,$ then set $(a_{\ell+1},b_{\ell+1})=(a_\ell+i'_{\ell+1}-i'_\ell-1,s'_{\ell+1}) $ and $(c_{\ell+1},d_{\ell+1})=(c_\ell+1,0).$
		\item[Case 2.] If $i'_{\ell+1}=i'_\ell+1,$ then: \begin{itemize}
			\item If $b_\ell=0$ and $d_\ell=1,$ then set $(a_{\ell+1},b_{\ell+1})=(a_\ell,1)$ and $(c_{\ell+1},d_{\ell+1})=(c_\ell+1,s_{\ell+1}-1).$
			\item Otherwise set $(a_{\ell+1},b_{\ell+1})=(a_\ell+1, s_{\ell+1}-1)$ and $(c_{\ell+1},d_{\ell+1})=(c_\ell,1).$
		\end{itemize}
		\item[Case 3.] If $i'_{\ell+1}=i'_\ell,$ which implies $s'_\ell=b_\ell=d_\ell=0,$ then set $(a_{\ell+1},b_{\ell+1})=(a_\ell,1)$ and $(c_{\ell+1},d_{\ell+1})=(c_\ell,1).$
	\end{enumerate}
We repeat the same construction for every $r.$ Then $\alpha=\alpha_0\wedge\alpha_1\wedge\cdots \wedge\alpha_\nu$ is a monomial summand  of $\phi(K).$ Now suppose that there exists another monomial $K^* $ of $L$ such that $\phi(K^*)$ contains $\alpha$ as a monomial summand. Let $x_{i^*_1,s^*_1}$ be the smallest variable appearing in $K^*.$ Then $x_{i^*_1,s^*_1}\preceq x_{i_1,s_1}.$  The expression of $\phi(x_{i^*_1,s^*_1})$  has a summand of the form $\eta_{u,v}^r\wedge\xi_{\tilde{u},\tilde{v}}^r$ for some $r$ that appears in $\alpha$ where $u+\tilde{u}=i^*_1$ and $v+\tilde{v}=s^*_1.$ This means that $x_{i^*_1,s^*_1}$ appears in the monomial $K,$ and therefore $x_{i^*_1,s^*_1}=x_{i_1,s_1}.$  By going through all the variables of $K^*$ and  repeating the same argument, we conclude  $K=K^*.$ Thus $\phi(L)\neq0$ and  $L\notin I_{p,(m+1,3)}.$
	\end{proof}
\begin{proposition}
	If $m\ge 1,$ $n\ge 3,$ and $p\ge 2,$ the family $\{(x^p)^{(k,\ell)}\}_{0\le k\le mp, 0\le \ell \le np}$ is {\em not} a Gr\"obner basis of the differential ideal generated by $x^p$ in the ring $\CC[x^{(\le m, \le n)}]$ with respect to any $T_{m,n}$-ordering.
\end{proposition}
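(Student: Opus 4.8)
The plan is to reduce the claim to producing one explicit polynomial in the ideal, to carry that out completely for $p=2$, and to sketch the same scheme for $p\ge 3$.

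First I would reformulate what it means to be a Gröbner basis. By \Cref{prop Tm2ord}, each $(x^p)^{(k,\ell)}$ with $0\le k\le mp$, $0\le\ell\le np$ has a unique monomial summand supported on a face of $T_{m,n}$; since a $T_{m,n}$-ordering is, by definition, one for which $\lm_\prec$ of every $(x^p)^{(k,\ell)}$ is face-supported, that summand must equal $\lm_\prec\!\big((x^p)^{(k,\ell)}\big)$ for \emph{every} $T_{m,n}$-ordering $\prec$. Hence, writing $G:=\{(x^p)^{(k,\ell)}\}$, the leading-monomial ideal $F:=\langle\lm_\prec(g):g\in G\rangle$ does not depend on $\prec$, and a monomial $\prod_v\big(x^{(v)}\big)^{a_v}$ avoids $F$ exactly when $\sum_{v\in\tau}a_v\le p-1$ for every triangle $\tau$ of $T_{m,n}$. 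Since $\CC[x^{(\le m,\le n)}]/\langle x^p\rangle$ is finite-dimensional and $\init_\prec\langle x^p\rangle\supseteq F$, with equality if and only if $G$ is a Gröbner basis, comparison of Hilbert functions gives: $G$ is a Gröbner basis for one (equivalently, every) $T_{m,n}$-ordering iff $\dim_\CC\CC[x^{(\le m,\le n)}]/\langle x^p\rangle=\dim_\CC\CC[x^{(\le m,\le n)}]/F$. It therefore suffices to exhibit a nonzero element of $\langle x^p\rangle\subseteq\CC[x^{(\le m,\le n)}]$ all of whose monomials avoid $F$; its $\prec$-leading monomial then lies outside $F$ for every $T_{m,n}$-ordering.

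For $p=2$ I would take the monomial $x^{(0,0)}x^{(0,2)}x^{(1,1)}$: when $n\ge3$, no triangle of $T_{m,n}$ contains both $(0,0)$ and $(0,2)$ (two points of the zeroth column differing by $2$), so $\{(0,0),(0,2),(1,1)\}$ lies in no face and this monomial avoids $F$. Expanding multinomials as in the proof of \Cref{prop iso partial} one obtains the identity
\[
6\,x^{(0,0)}x^{(0,2)}x^{(1,1)}=x^{(0,0)}(x^2)^{(1,3)}-2\,x^{(1,3)}(x^2)^{(0,0)}-3\,x^{(1,2)}(x^2)^{(0,1)}-x^{(0,3)}(x^2)^{(1,0)},
\]
which is valid in $\CC[x^{(\le m,\le n)}]$ whenever $m\ge1$ and $n\ge3$ (so that none of the eight occurring variables is truncated to $0$). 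Thus $x^{(0,0)}x^{(0,2)}x^{(1,1)}\in\langle x^2\rangle$, and the case $p=2$ is complete.

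For $p\ge3$ the plan is the same in outline: build an explicit $\CC[x^{(\le m,\le n)}]$-combination of the generators $(x^p)^{(k,\ell)}$ that is nonzero and supported on monomials avoiding $F$. A natural starting point is a mixed relation $(x^p)^{(1,\ell)}$ with $\ell$ near $n$, each summand of which couples one variable of the first $t$-column to variables of the zeroth column; one multiplies it by a suitable monomial in the corner variables $x^{(0,0)}$, $x^{(0,n)}$ and then reduces the non-standard summands against the corner generators $(x^p)^{(0,0)}=\big(x^{(0,0)}\big)^p$, $(x^p)^{(0,1)}$, $(x^p)^{(0,2)}$, $(x^p)^{(1,0)}$, whose monomials cluster at the vertices $(0,0),(0,1)$. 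The hard part is to choose the relation and the multiplier so that this reduction does not collapse to $0$: for $p=2$ a single factor $x^{(0,0)}$ works and exactly one, visibly standard, summand survives, whereas for $p\ge3$ the corner relations must be chained over several steps and genuine nonvanishing has to be verified — equivalently, one must show the strict inequality $\dim_\CC\CC[x^{(\le m,\le n)}]/\langle x^p\rangle<\dim_\CC\CC[x^{(\le m,\le n)}]/F$. I expect establishing this strict drop for all $p\ge2$ to be the technical crux, with the remainder being routine multinomial bookkeeping.
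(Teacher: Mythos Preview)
Your reformulation via the ordering-independent monomial ideal $F$ is correct and clean, and the $p=2$ argument is complete: the identity you wrote down checks out, and for $n\ge 3$ none of $\{(0,0),(0,2)\}$, $\{(0,0),(1,1)\}$, $\{(0,2),(1,1)\}$ is an edge of $T_{m,n}$, so $x^{(0,0)}x^{(0,2)}x^{(1,1)}$ genuinely avoids~$F$.

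The gap is exactly where you flag it: for $p\ge 3$ you only offer a programme, not an argument. The suggestion to multiply a mixed relation $(x^p)^{(1,\ell)}$ by corner monomials and then reduce against the corner generators is plausible, but nothing in your sketch guarantees that the outcome is nonzero, and you yourself identify this nonvanishing as the ``technical crux.'' As written, the proposition is therefore established only for $p=2$.

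The paper sidesteps this difficulty by not searching for an element with \emph{all} monomials outside $F$. It reduces to $m=1$ and applies Buchberger's criterion directly to the pair $(x^p)^{(p-1,3)}$, $(x^p)^{(p-1,0)}$, whose leading monomials are $x_{0,3}x_{1,0}^{p-1}$ and $x_{0,0}x_{1,0}^{p-1}$. Using the bihomogeneity of the $S$-polynomial, it lists the finitely many products $(x^p)^{(a,b)}x_{p-1-a,\,3-b}$ of the correct bidegree, observes that only two of them have leading monomial below the $\lcm$, and then points to the monomial $x_{0,0}x_{0,2}x_{1,0}^{p-2}x_{1,1}$ of the $S$-polynomial, which occurs in neither. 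This is uniform in $p\ge 2$ and requires no inductive chaining of reductions. If you wish to complete your approach, the quickest fix is to imitate this step: form $x^{(0,0)}\cdot(x^p)^{(p-1,3)}-c\cdot x^{(0,3)}\cdot(x^p)^{(p-1,0)}$ for the appropriate constant $c$ and argue directly, via the same bidegree bookkeeping, that its reduction modulo $G$ is nonzero, rather than trying to produce a polynomial supported entirely off~$F$.
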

\begin{proof}
If $\{(x^p)^{(k,\ell)}\}_{0\le k\le mp, 0\le \ell \le np}$ is a Gr\"obner basis of the differential ideal generated by $x^p$ w.r.t. the $T_{m,n}$-ordering $\prec,$ the same statement holds for the \mbox{$T_{m-1,n}$-ordering~$\prec.$} Therefore, we  restrict our proof to the case $m=1.$
Consider the differential polynomials $(x^p)^{(p-1,3)}$ and $(x^p)^{(p-1,0)}.$ We will show that their $S$-polynomial does not have an LCM-representation. By \cite[Theorem~2.9.6]{cox2007ideals}, the $(x^p)^{(k,\ell)}$ then are not a Gr\"obner basis.  Note that $\lm((x^p)^{(p-1,3)})=x_{0,3}x_{1,0}^{p-1}$ and $\lm((x^p)^{(p-1,0)})=x_{0,0}x_{1,0}^{p-1}.$ Their least common multiple is 
$\lcm(\lm((x^p)^{(p-1,3)}),\lm((x^p)^{(p-1,0)}))=x_{0,0}x_{0,3}x_{1,0}^{p-1}.$
We proceed with the proof by contradiction. Suppose that 
\[S((x^p)^{(p-1,3)},(x^p)^{(p-1,0)})=\sum_{a,b}(x^p)^{(a,b)}g_{a,b},\]
\vspace*{-2mm}

\noindent where $\lm((x^p)^{(a,b)}g_{a,b})\prec x_{0,0}x_{0,3}x_{1,0}^{p-1}.$ Since all monomials in $S((x^p)^{(p-1,3)},(x^p)^{(p-1,0)})$ are of degree $p+1$ and homogeneous w.r.t. both derivatives $\partial_s$ and $\partial_t,$ we can write 
\[S((x^p)^{(p-1,3)},(x^p)^{(p-1,0)})=\sum_{p-2\le a\le p-1,0\le b\le 3} c_{a,b}(x^p)^{(a,b)}x_{p-1-a,3-b},\]
where $c_{a,b}$ are constants and $(x^p)^{(a,b)}x_{p-1-a,3-b}\prec   x_{0,1}x_{0,3}x_{1,0}^{p-1}.$ We now list the  polynomials that can show up in the previous equality with their leading monomials: 
\begin{align*}
	(x^p)^{(p-2,0)}x_{1,3}, & \quad\lm(	(x^p)^{(p-2,0)}x_{1,3} )=x_{0,0}^2x_{1,0}^{p-2}x_{1,3},\\
	(x^p)^{(p-2,1)}x_{1,2}, & \quad\lm(	(x^p)^{(p-2,1)}x_{1,2} )=x_{0,0}x_{0,1}x_{1,0}^{p-2}x_{1,2},\\
	(x^p)^{(p-2,2)}x_{1,1}, & \quad\lm(	(x^p)^{(p-2,2)}x_{1,1} )=x_{0,1}^2x_{1,0}^{p-2}x_{1,1},\\
	(x^p)^{(p-2,3)}x_{1,0}, &\quad \lm(	(x^p)^{(p-2,3)}x_{1,0} )=x_{0,1}x_{0,2}x_{1,0}^{p-2}x_{1,1},\\
	(x^p)^{(p-1,0)}x_{0,3}, & \quad\lm(	(x^p)^{(p-1,0)}x_{0,3} )=x_{0,0}x_{1,0}^{p-1}x_{0,3},\\
	(x^p)^{(p-1,1)}x_{0,2}, &\quad \lm(	(x^p)^{(p-1,1)}x_{0,2} )=x_{0,1}x_{1,0}^{p-1}x_{0,2},\\
	(x^p)^{(p-1,2)}x_{0,1}, & \quad\lm(	(x^p)^{(p-1,2)}x_{0,1} )=x_{0,2}x_{1,0}^{p-1}x_{0,1},\\
	(x^p)^{(p-1,3)}x_{0,0}, &\quad  \lm(	(x^p)^{(p-1,3)}x_{0,0} )=x_{0,3}x_{1,0}^{p-1}x_{0,0}.
\end{align*}
Among all these polynomials, only $(x^p)^{(p-2,0)}x_{1,3}$ and $(x^p)^{(p-2,1)}x_{1,2}$ have leading monomials smaller than $ x_{0,0}x_{0,3}x_{1,0}^{p-1}$ under the ordering $\prec.$ Thus,
\begin{equation}\label{eq:Spol}
S((x^p)^{(p-1,3)},(x^p)^{(p-1,0)})=c_{p-2,0}(x^p)^{(p-2,0)}x_{1,3}+c_{p-2,1}(x^p)^{(p-2,1)}x_{1,2}.\end{equation}
Note that $x_{0,2}x_{1,0}^{p-2}x_{1,1}$ is a monomial summand  of the polynomial $(x^p)^{(p-1,3)}.$ Then
 $x_{0,0}x_{0,2}x_{1,0}^{p-2}x_{1,1}$ shows up in $S((x^p)^{(p-1,3)},(x^p)^{(p-1,0)})$ but not in $c_{p-2,0}(x^p)^{(p-2,0)}x_{1,3}+c_{p-2,1}(x^p)^{(p-2,1)}x_{1,2},$ which is in contradiction to Equation~\eqref{eq:Spol}.
\end{proof}
\section{Linking $\dim_{\CC}(R_n/C_{p,n})$ and $\dim_{\CC}(R_{m,n}/C_{p,(m,n)})$ to lattice polytopes}\label{section polynomiality}
We now link the sequences $\dim_{\CC}(R_n/C_{p,n})$ and $\dim_{\CC}(R_{m,n}/C_{p,(m,n)})$---both considered as sequences in~$p$---to lattice polytopes.

\subsection{Polynomiality of $\dim_{\CC}(R_n/C_{p,n})$}
We investigate the following question.
\begin{question}\label{conjBernd} 
	Fix $n\in \NN.$ As $p$ varies, is $\left( \dim_{\CC} (R_n/C_{p,n}) \right)_{p\in \NN}$ a polynomial in~$p$ of degree $n+1$?
\end{question}
Before turning to the proof that the answer to this question is {\em yes}, we present an explicit example.
\begin{example}[$\dim_{\CC} (R_6/C_{p,6})_{p\in \mathbb{N}}$]
	Computations in {\tt Singular}~\cite{Singular} reveal the first $13$ entries of the sequence $\dim_{\CC} (R_6/C_{p,6})_{p\in \mathbb{N}}$ to be
	\begin{align*}\begin{split}
	0,1, 34, 353, 2\,037, 8\,272, 26\,585, 72\,302, 17\,3502, 377\,739, 760\,804, 
	1\,437\,799, 2\,576\,795,
	\end{split}\end{align*} 
 coinciding with the sequence \href{https://www.oeis.org/A244881}{www.oeis.org/A244881}. 
With~{\tt Mathematica}, we compute the interpolating polynomial on the values for $p=1,\ldots,20$ to be
$$\frac{17}{315}p^7+\frac{17}{90} p^6+\frac{53}{180}p^5+\frac{19}{72}p^4+\frac{13
	}{90}p^3+\frac{17}{360}p^2+\frac{1}{140}p,$$ which is indeed of degree $7=6+1.$
\end{example}
Let $\prec$ denote the reverse lexicographical ordering on $R_n=\CC[x_0,\ldots,x_n].$ 
The following lemma, as also presented in \cite{BMS}, determines the initial ideal of $C_{p,n}$ w.r.t.~$\prec.$ We present a proof building on Zobnin's result and~\Cref{prop iso}. 
\begin{lemma}\label{lemma inital Ip}
	The initial ideal of $C_{p,n}$ with respect to $\prec$  is generated by the family $\left\{x_i^{u_i}x_{i+1}^{u_{i+1}} \mid  u_i+u_{i+1}=p,  \, 0\leq i\leq n-1\right\}.$
\end{lemma}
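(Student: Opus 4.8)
The plan is to deduce the lemma from Zobnin's \Cref{thm Zobnin} through the isomorphism of \Cref{prop iso}. Write $B\coloneqq\CC[x^{(\le n)}]=\CC[x^{(0)},\dots,x^{(n)}]$, let $\rho\colon\CC[x^{(\infty)}]\to B$ be the truncation ring homomorphism with $\rho(x^{(j)})=x^{(j)}$ for $j\le n$ and $\rho(x^{(j)})=0$ for $j>n$, and set $J\coloneqq\langle\,\rho((x^p)^{(k)})\mid k\in\NN\,\rangle$. By the proof of \Cref{prop iso}, $J=\ker\varphi$, and the variable rescaling $x^{(k)}\mapsto k!\,x_k$ is a ring isomorphism $B\xrightarrow{\cong}R_n$ carrying $J$ onto $C_{p,n}$; since rescaling variables by nonzero scalars changes neither a monomial nor the ordering $\prec$, it suffices to prove $\init_\prec(J)=\left\langle\,(x^{(i)})^{u_i}(x^{(i+1)})^{u_{i+1}}\mid u_i+u_{i+1}=p,\ 0\le i\le n-1\,\right\rangle$.

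Next I would isolate two elementary facts, both consequences of the bihomogeneity of $(x^p)^{(\ell)}$ (every monomial $\prod_j(x^{(j)})^{a_j}$ occurring in it satisfies $\sum_j a_j=p$ and $\sum_j ja_j=\ell$). First: if $\ell>np$, then $\sum_j ja_j\le n\sum_j a_j=np<\ell$ forces some $a_j$ with $j>n$ to be positive, so $\rho((x^p)^{(\ell)})=0$; hence $J=\langle\,\rho((x^p)^{(k)})\mid 0\le k\le np\,\rangle$. Second: by Zobnin's $\beta$-ordering property combined with the two bihomogeneity relations (equivalently, by a direct reading of the multinomial expansion), $\lm_\prec\big((x^p)^{(\ell)}\big)=(x^{(i)})^{u_i}(x^{(i+1)})^{u_{i+1}}$ with $i=\lfloor\ell/p\rfloor$, $u_{i+1}=\ell-ip$ and $u_i=p-u_{i+1}$; for $0\le\ell\le np$ this monomial uses only $x^{(0)},\dots,x^{(n)}$, so it survives $\rho$ and stays the leading monomial, i.e.\ $\lm_\prec\big(\rho((x^p)^{(\ell)})\big)=\lm_\prec\big((x^p)^{(\ell)}\big)$. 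Letting $\ell$ range over $0,\dots,np$, these leading monomials are precisely the ones appearing in the statement, which already yields the inclusion $\left\langle(x^{(i)})^{u_i}(x^{(i+1)})^{u_{i+1}}\right\rangle\subseteq\init_\prec(J)$.

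For the reverse inclusion I would show that $G\coloneqq\{\rho((x^p)^{(k)})\mid 0\le k\le np\}$ is a Gr\"obner basis of $J$ for $\prec$, and then read off $\init_\prec(J)=\langle\lm_\prec(g)\mid g\in G\rangle$, which by the second fact above is the asserted monomial ideal. To verify the Gr\"obner basis property I would invoke Buchberger's criterion in its LCM-representation form \cite[Theorem~2.9.6]{cox2007ideals}. Fix $0\le j,k\le np$. By the second fact, both leading monomials, hence $\mu\coloneqq\lcm\big(\lm_\prec\rho((x^p)^{(j)}),\lm_\prec\rho((x^p)^{(k)})\big)=\lcm\big(\lm_\prec(x^p)^{(j)},\lm_\prec(x^p)^{(k)}\big)$, use only $x^{(0)},\dots,x^{(n)}$, and one checks that $S\big(\rho((x^p)^{(j)}),\rho((x^p)^{(k)})\big)=\rho\big(S((x^p)^{(j)},(x^p)^{(k)})\big)$. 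By \Cref{thm Zobnin}, $\{(x^p)^{(\ell)}\mid\ell\in\NN\}$ is a Gr\"obner basis of $\langle x^p\rangle^{(\infty)}$, so $S((x^p)^{(j)},(x^p)^{(k)})=\sum_\ell h_\ell(x^p)^{(\ell)}$ with $\lm_\prec\big(h_\ell(x^p)^{(\ell)}\big)\prec\mu$ for all $\ell$ with $h_\ell\neq0$. Applying $\rho$, the terms with $\ell>np$ vanish by the first fact, and since $\rho$ never raises a leading monomial, $\lm_\prec\big(\rho(h_\ell)\rho((x^p)^{(\ell)})\big)\preceq\lm_\prec\big(h_\ell(x^p)^{(\ell)}\big)\prec\mu$; thus $S\big(\rho((x^p)^{(j)}),\rho((x^p)^{(k)})\big)=\sum_{\ell\le np}\rho(h_\ell)\rho((x^p)^{(\ell)})$ is an LCM-representation, and $G$ is a Gr\"obner basis. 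The step I expect to require the most care---and really the only content beyond Zobnin's theorem---is this transfer of the Gr\"obner basis property through the quotient by $\langle x^{(n+1)},x^{(n+2)},\dots\rangle$: it is not automatic for a general ideal, and it works here only because the leading monomials of the relevant derivatives never involve a killed variable while every derivative of order $>np$ maps to zero.
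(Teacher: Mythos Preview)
Your proof is correct and follows essentially the same route as the paper's: identify the leading monomials of the generators $(x^p)^{(k)}$ for $0\le k\le np$ via bihomogeneity, and then invoke Zobnin's theorem to conclude that these leading monomials generate the initial ideal of the truncated ideal. The only difference is in how the passage from $\CC[x^{(\infty)}]$ to $\CC[x^{(\le n)}]$ is justified: the paper argues directly that the leading monomial of any element $\sum_{k\le np}[r_k](x^p)^{(k)}$ is caught by some $\lm((x^p)^{(k)})$, whereas you make this transfer explicit by pushing an LCM-representation of each $S$-polynomial through the truncation map $\rho$ and checking that the surviving terms still satisfy Buchberger's bound. Your version is more carefully worded on precisely the point you flag as delicate, but the underlying idea is the same.
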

\begin{proof}
	Let us first prove that the leading monomials of our family of generators are $x_i^{u_i}x_{i+1}^{u_{i+1}}.$ Let $0\le k< np$ be of the form $k=mp+(p-a),$ where $1\le a \le p$ and $0\le m\le n-1.$ For $k=np,$ the leading term of $f_k$ is $x_n^p,$ where $f_k$ denotes the coefficient of $t^k$ in the polynomial $f_{p,n}.$  We claim that the leading monomial of the polynomial $f_k$ is $x_m^ax_{m+1}^{p-a}.$ Suppose that $x_0^{i_0 }\cdots x_{n}^{i_{n}} \succ x_m^ax_{m+1}^{p-a}$ for some monomial summand $x_0^{i_0}\cdots x_{n}^{i_{n}}$ in $f_k.$ This implies that $i_0=\cdots=i_{m-1}=0.$ Then  $i_m+\cdots +i_{n}=p$ and $mi_m+\cdots +ni_{n}=mp+p-a=k.$ Since $i_m\leq a,$ from 
	$$(a-i_m)+(m+1)(i_{m}+\cdots+i_n-p)+(i_{m+2}+\cdots+(n-m-1)i_n) = 0$$
	 we conclude that $i_{m}=a,$ $i_{m+1}=p-a,$ and $i_{m+2}=\cdots=i_n=0.$ Therefore, $x_m^ax_{m+1}^{p-a}$ is indeed the leading monomial of $f_k.$ We now consider the truncated differential ring $\CC[x^{(\leq n)}].$ As rings, $\CC[x^{\leq n}]\cong \CC[x_0,\ldots,x_n]=R_n.$ Then the following holds:
$$\init_\prec \langle\{(x^p)^{(k)} \mid 0\le k\le np\}\rangle = \langle\{ \lm \left(\sum_{k=0}^{np} [r_k](x^p)^{(k)}\right) \big|r_k\in \CC[x^{(\infty)}]\}\rangle,$$ where $[r_k]$ denotes the equivalence class of $r_k$ in $\CC[x^{\leq n}].$
 	By Zobnin's result, 
	\linebreak$\lm \left(\sum_{k=0}^{np} (x^p)^{(k)}r_k\right)$ is contained in the ideal generated by the family of elements $\{\lm(x^p)^{(k)}\}_{k\in \NN}.$  Therefore, the initial ideal of $\langle \{ (x^p)^{(k)}\}_{0\le k\le np} \rangle $ is generated by \linebreak $\{\lm ((x^p)^{(k)})\}_{0\le k\le np},$ concluding the proof.	
\end{proof}
\begin{lemma}\label{lemma polytope}
	The convex $(n+1)$-dimensional polytope $$P_n  \coloneqq  \left\{(u_0,\ldots,u_n)\in (\mathbb{R}_{\geq 0})^{n+1} \, \big| \, u_i+u_{i+1}\leq 1 \text{ for all } 0\leq i\leq n-1\right\}$$ is a lattice polytope whose vertices are binary vectors with no consecutive $1$s.
\end{lemma}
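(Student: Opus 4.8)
The plan is to check, in order, that $P_n$ is a bounded polytope of dimension $n+1$, that each of its vertices is a $0/1$ vector, and that the $0/1$ points of $P_n$ are precisely the binary vectors with no two consecutive $1$s, all of which are vertices. We may assume $n\ge 1$. For boundedness, $u_i\ge 0$ combined with $u_i\le u_i+u_{i+1}\le 1$ for $0\le i\le n-1$ and $u_n\le u_{n-1}+u_n\le 1$ yields $P_n\subseteq[0,1]^{n+1}$, so $P_n$ is a compact convex polyhedron, hence a polytope. It is full-dimensional because $(\tfrac13,\dots,\tfrac13)$ satisfies every defining inequality strictly and so lies in the interior; thus $\dim P_n=n+1$.

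Next I show that any $u\in P_n$ with some coordinate in $(0,1)$ fails to be a vertex, which forces every vertex of $P_n$ to be a $0/1$ vector (recall that all coordinates of points of $P_n$ lie in $[0,1]$, so a non-$0/1$ coordinate is one in $(0,1)$). Let $J=\{i:0<u_i<1\}\neq\emptyset$, decompose $J$ into maximal runs of consecutive indices, and define $\delta\in\RR^{n+1}$ by $\delta_i=(-1)^{i-a}$ on a run $\{a,\dots,b\}$ and $\delta_i=0$ for $i\notin J$; note $\delta\neq 0$. For $0<\epsilon<\min_{i\in J}\min(u_i,1-u_i)$ I claim $u\pm\epsilon\delta\in P_n$, so that $u=\tfrac12\bigl((u+\epsilon\delta)+(u-\epsilon\delta)\bigr)$ is not extreme. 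Nonnegativity is immediate since the fractional coordinates move inside $(0,1)$ and the others are fixed. For a constraint $u_i+u_{i+1}\le 1$: if $i$ and $i+1$ lie in the same run then $\delta_i+\delta_{i+1}=0$ and the constraint is unchanged; if neither lies in $J$ it is unchanged; and if exactly one of $i,i+1$ lies in $J$---say $i\in J$, $i+1\notin J$---then $u_{i+1}\in\{0,1\}$, and $u_{i+1}=1$ is impossible because $u_i+u_{i+1}\le 1$ would force $u_i=0$, contradicting $i\in J$; hence $u_{i+1}=0$, so $u_i+u_{i+1}=u_i<1$ has slack and a sufficiently small perturbation in either direction preserves it. The symmetric case is identical.

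Finally, a $0/1$ vector lies in $P_n$ exactly when it has no two consecutive $1$s, since that is literally the content of the inequalities $u_i+u_{i+1}\le 1$. Any such vector $v$ is a vertex of the cube $[0,1]^{n+1}$ lying inside $P_n\subseteq[0,1]^{n+1}$, hence a vertex of $P_n$: a decomposition $v=\tfrac12(a+b)$ with $a,b\in P_n$ is also one inside the cube, forcing $a=b=v$. Together with the second paragraph this identifies the vertex set of $P_n$ with the set of binary vectors having no two consecutive $1$s; in particular all vertices are lattice points, so $P_n$ is a lattice polytope. The delicate point is the interface analysis in the second paragraph---that an integral neighbour of a fractional coordinate is forced to be $0$ rather than $1$, so that the relevant sum-inequality can be perturbed in both directions. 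Alternatively, one may observe that $P_n$ is the fractional stable set polytope $\QSTAB$ of the path on $n+1$ vertices, whose vertex--edge incidence matrix is totally unimodular (the path being bipartite), and invoke the standard integrality statement and stable-set description for such polytopes.
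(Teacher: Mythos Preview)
Your proof is correct and self-contained. The paper, by contrast, proceeds in one line: it identifies $P_n$ with the fractional stable set polytope $\QSTAB(G)$ of the path graph $G$ on $\{0,\ldots,n\}$, observes that $G$ is perfect, and invokes Chv\'atal's theorem that $\STAB(G)=\QSTAB(G)$ precisely for perfect graphs; the vertices are then the incidence vectors of stable sets, i.e., binary vectors with no two consecutive $1$s.

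Your argument is genuinely different. Rather than appealing to the perfect-graph machinery, you give a direct perturbation: on each maximal run of fractional coordinates you alternate $\pm 1$ so that adjacent perturbations cancel inside a run, and you handle the run boundaries by noting that an integral neighbour of a fractional coordinate must be $0$ (not $1$), leaving slack in that constraint. This is elementary and requires no background beyond basic polytope theory, at the cost of a page of case-checking. The paper's route is shorter but imports a substantial theorem. Your closing remark about total unimodularity of the path's incidence matrix is a third route---closer in spirit to the paper's but still distinct, since it uses the bipartite/TU integrality result rather than the perfect-graph characterization. Any of the three is adequate here; yours has the virtue of being fully explicit.
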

Before proving the lemma, we recall some definitions. Let $G=(V,E)$ be an  undirected graph, where $V$ denotes the set of vertices and $E$ the set of edges. 
A graph is {\em perfect} if for every subgraph, the chromatic number equals the clique number of that subgraph.
A subset $S\subseteq V$ of  vertices is called {\em stable} if no two elements of $S$ are adjacent. 
 Borrowing the notation from~\cite{gro99}, the {\em stable set polytope of $G$} is the polytope
$$ \STAB(G)\coloneqq \conv \left\{ \chi^S \in \RR^V \mid S \subseteq V \text{ stable}\right\},$$
where the {\em incidence vectors}  $\chi^S=(\chi^S_v)_{v\in V}\in \RR^V$ are defined as  
$$ \chi_v^S \coloneqq \begin{cases}
1 & \text{if } v\in S,\\
0 & \text{else}.
\end{cases} $$
The {\em fractional stable set polytope of $G$} is defined as
$$ \QSTAB(G)\coloneqq \left\{ x \in \RR^V \mid 0 \leq x(v)\, \forall v \in V, \ \sum_{v\in Q} x(v) \leq 1 \text{ for all cliques } Q \text{ of }G \right\}.$$
Hence $\STAB(G) = \conv\{ \{0,1\}^V\cap \QSTAB(G) \}.$
Chv\'{a}tal \cite[Theorem 3.1]{ch75} proved that a graph $G$ is perfect if and only if $\STAB(G)=\QSTAB(G).$ If follows from Fulkerson's theory of anti-blocking polyhedra~\cite{Fulkerson} that this result is equivalent to the perfect graph theorem. The latter was conjectured by Berge~\cite{berge} and proven by Lov\'asz~\cite{Lo72}. 
 \begin{proof}[Proof of \Cref{lemma polytope}]
Consider the graph $G=(\{0,1, \ldots, n\}, \{ [i,i+1]\}_{i=0,\ldots,n-1}).$ 
Observe that $P_n$ is precisely  the fractional stable set polytope of $G.$ Since $G$ is a perfect graph, $\QSTAB(G)=\STAB(G)$ and $P_n$ has zero-one vertices as claimed.
\end{proof}
For a $d$-dimensional polytope $P\subseteq \RR^n$ with integer vertices and $t\in \NN,$ denote by $L_P(t)\coloneqq \vert tP\cap \ZZ^n \! \vert$ the number of lattice points of the dilated polytope~$tP.$ Ehrhart proved that this number is a rational polynomial in~$t$ of degree~$d,$ i.e., there exist rational numbers $l_{P,0},\ldots, l_{P,d},$ s.t. 
$$ L_P(t)  =  l_{P,d}t^d +\cdots + l_{P,1}t + l_{P,0} . $$
The polynomial $L_P \in \QQ[t]  $ is called the {\em Ehrhart polynomial} of~$P.$ 
\begin{theorem}\label{thm Ehrhart}
	The number	$\dim_{\CC}(R_n/C_{p,n})$ is the Ehrhart polynomial of the polytope~$P_n$ defined in \Cref{lemma polytope} evaluated at $p-1.$
\end{theorem}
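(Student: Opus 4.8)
The plan is to reduce the computation of $\dim_{\CC}(R_n/C_{p,n})$ to a lattice-point count by passing to the initial ideal. Since taking the initial ideal preserves the Hilbert function, and in particular the $\CC$-vector space dimension of the quotient, we have $\dim_{\CC}(R_n/C_{p,n}) = \dim_{\CC}(R_n/\init_\prec C_{p,n})$, and the latter is the number of \emph{standard monomials}, i.e.\ monomials $x_0^{a_0}\cdots x_n^{a_n}$ that do not lie in $\init_\prec C_{p,n}$. By \Cref{lemma inital Ip}, $\init_\prec C_{p,n}$ is generated by the monomials $x_i^{u_i}x_{i+1}^{u_{i+1}}$ with $u_i+u_{i+1}=p$ and $0\le i\le n-1$, so the whole problem becomes combinatorial.

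Next I would establish the characterization: a monomial $x_0^{a_0}\cdots x_n^{a_n}$ lies in $\init_\prec C_{p,n}$ if and only if $a_i+a_{i+1}\ge p$ for some $0\le i\le n-1$. Divisibility by a generator $x_i^{u_i}x_{i+1}^{u_{i+1}}$ forces $a_i\ge u_i$ and $a_{i+1}\ge u_{i+1}$, hence $a_i+a_{i+1}\ge u_i+u_{i+1}=p$; conversely, if $a_i+a_{i+1}\ge p$, set $u_i=\min(a_i,p)$ and $u_{i+1}=p-u_i$, so that $0\le u_{i+1}\le a_{i+1}$ and $x_i^{u_i}x_{i+1}^{u_{i+1}}$ divides the monomial. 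Consequently the standard monomials are exactly the $x_0^{a_0}\cdots x_n^{a_n}$ with $a_i\ge 0$ for all $i$ and $a_i+a_{i+1}\le p-1$ for all $0\le i\le n-1$. The assignment $x_0^{a_0}\cdots x_n^{a_n}\mapsto (a_0,\ldots,a_n)$ is then a bijection between standard monomials and $\{(a_0,\ldots,a_n)\in\ZZ_{\ge 0}^{n+1}\mid a_i+a_{i+1}\le p-1\ \forall\, i\}$, which by the defining inequalities of $P_n$ is precisely $(p-1)P_n\cap\ZZ^{n+1}$.

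Finally, I would conclude that $\dim_{\CC}(R_n/C_{p,n}) = \lvert (p-1)P_n\cap\ZZ^{n+1}\rvert = L_{P_n}(p-1)$. By \Cref{lemma polytope}, $P_n$ is an $(n+1)$-dimensional lattice polytope, so Ehrhart's theorem gives that $L_{P_n}$ is a rational polynomial of degree $n+1$; evaluating at $p-1$ (equivalently, viewing it as a polynomial in $p$) answers \Cref{conjBernd} affirmatively. I do not anticipate a genuine obstacle: the only step requiring care is the "if and only if" characterization of membership in the monomial ideal $\init_\prec C_{p,n}$, which rests on the elementary observation that $p$ can be split as $u_i+u_{i+1}$ with $u_i\le a_i$, $u_{i+1}\le a_{i+1}$ exactly when $a_i+a_{i+1}\ge p$; the remainder is bookkeeping together with the standard facts that initial ideals preserve $\CC$-dimensions and that Ehrhart's counting function is polynomial of degree equal to the dimension of the polytope.
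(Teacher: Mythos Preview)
Your proposal is correct and follows exactly the paper's own argument: use \Cref{lemma inital Ip} to identify the standard monomials as those exponent vectors with $a_i+a_{i+1}\le p-1$, then invoke \Cref{lemma polytope} to recognize this set as $(p-1)P_n\cap\ZZ^{n+1}$. The paper's proof is a two-sentence sketch of precisely this; your version simply spells out the divisibility characterization and the passage to the initial ideal in more detail.
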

\begin{proof}
From \Cref{lemma inital Ip} we read that $x_0^{u_0}\cdots x_n^{u_n}$ is a standard monomial if and only if $u_i+u_{i+1}<p$ for all $0\leq i \leq n-1.$ The claim then follows from \Cref{lemma polytope}.
\end{proof}

\subsection{Investigation of $\dim_{\CC}(R_{m,2}/C_{p,(m,2)})$}\label{section two var}
In this section, we generalize the results found for $R_n/C_{p,n}$  to two independent variables, i.e., to $R_{m,n}/C_{p,(m,n)}.$ 
	\begin{theorem}\label{prop dimmultivar} For all $m,p\in \NN,$ $\dim_{\CC} ( R_{m,2}/C_{p,(m,2)})$ is the Ehrhart polynomial of the \mbox{$3(m+1)$-dimensional} lattice polytope
	\begin{align*}
P_{(m,2)}  \coloneqq  &\big\{(u_{00},u_{01},u_{02},\ldots,u_{m0},u_{m1},u_{m2})\in (\mathbb{R}_{\geq 0})^{3(m+1)} \big| u_{k_1,l_1}+u_{k_2,l_2}+u_{k_3,l_3}\leq 1\\ &\ \text{for all indices s.t. }\{(k_1,l_1),(k_2,l_2),(k_3,l_3)\} \text{ is a triangle of } T_{m,2} \big\}
\end{align*}
	evaluated at~$p-1.$
\end{theorem}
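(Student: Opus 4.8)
The plan is to repeat the proof of \Cref{thm Ehrhart} one level up: use \Cref{thm zobnin m2} in place of Zobnin's theorem, \Cref{prop Tm2ord} in place of the leading-term computation in \Cref{lemma inital Ip}, and a perfect graph attached to $T_{m,2}$ in place of \Cref{lemma polytope}. First I would fix a $T_{m,2}$-ordering $\prec$, for instance the reverse lexicographical ordering of \Cref{prop Tm2ord}; via the identification $x^{(k,\ell)}\leftrightarrow k!\,\ell!\,x_{k,\ell}$ this is at once an ordering on $R_{m,2}$ and on $\CC[x^{(\le m,\le 2)}]$. By \Cref{prop iso partial} we have $R_{m,2}/C_{p,(m,2)}\cong\CC[x^{(\le m,\le 2)}]/\langle (x^p)^{(k,\ell)}\rangle$, and by \Cref{thm zobnin m2} the family $\{(x^p)^{(k,\ell)}\}_{0\le k\le mp,\,0\le\ell\le 2p}$ is a Gr\"obner basis of this ideal with respect to $\prec$; transporting along the isomorphism, $\init_\prec C_{p,(m,2)}$ is generated by the leading monomials $\lm_\prec\!\big((x^p)^{(k,\ell)}\big)$. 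By \Cref{prop Tm2ord} (with $n=2$) together with the counting argument in its proof, these leading monomials are precisely all degree-$p$ monomials supported on the triangles of $T_{m,2}$; that is,
\[\init_\prec C_{p,(m,2)}=\big\langle\, x_{k_1,\ell_1}^{a}x_{k_2,\ell_2}^{b}x_{k_3,\ell_3}^{c}\ :\ a+b+c=p,\ \{(k_1,\ell_1),(k_2,\ell_2),(k_3,\ell_3)\}\ \text{a triangle of}\ T_{m,2}\,\big\rangle.\]

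Consequently the standard monomials, which form a $\CC$-basis of $R_{m,2}/C_{p,(m,2)}$, are the $\prod x_{k,\ell}^{u_{k,\ell}}$ for which no triangle $\{(k_1,\ell_1),(k_2,\ell_2),(k_3,\ell_3)\}$ of $T_{m,2}$ admits a triple $a+b+c=p$ with $a\le u_{k_1,\ell_1}$, $b\le u_{k_2,\ell_2}$, $c\le u_{k_3,\ell_3}$. A short greedy argument, just as in the proof of \Cref{thm Ehrhart}, shows that this happens if and only if $u_{k_1,\ell_1}+u_{k_2,\ell_2}+u_{k_3,\ell_3}\le p-1$ for every triangle of $T_{m,2}$. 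Therefore $\dim_\CC\!\big(R_{m,2}/C_{p,(m,2)}\big)$ equals the number of lattice points of the dilate $(p-1)\,P_{(m,2)}$.

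It then remains to show that $P_{(m,2)}$ is a lattice polytope, for then Ehrhart's theorem identifies the last count with $L_{P_{(m,2)}}(p-1)$; full-dimensionality (so that $\dim P_{(m,2)}=3(m+1)$) is clear, because the strict inequalities $0<u_{k,\ell}$, $u_F<1$ cut out a nonempty open set. Following \Cref{lemma polytope}, the plan is to realize $P_{(m,2)}$ as a stable set polytope. Let $G_{m,2}$ be the $1$-skeleton of $T_{m,2}$. One first checks that $T_{m,2}$ is a flag complex, i.e.\ every set of pairwise adjacent vertices of $G_{m,2}$ spans a face: every edge of $T_{m,2}$ lies within a single column $\{k\}\times\{0,1,2\}$ or between consecutive columns, so a clique is confined to two consecutive columns, where the block pattern of \Cref{fig reg triang m2} exhibits the cliques as exactly the four triangles of that block. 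Since moreover every vertex and every edge of $T_{m,2}$ is contained in a triangle, the clique inequalities of size $\le 2$ defining $\QSTAB(G_{m,2})$ are redundant, so $P_{(m,2)}=\QSTAB(G_{m,2})$. If one also knows that $G_{m,2}$ is perfect, then Chv\'{a}tal's theorem \cite[Theorem~3.1]{ch75} gives $P_{(m,2)}=\QSTAB(G_{m,2})=\STAB(G_{m,2})$, a $0/1$-polytope, hence a lattice polytope; Ehrhart's theorem then completes the proof.

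The main obstacle is precisely the perfectness of $G_{m,2}$ --- indeed, by the reductions above, $P_{(m,2)}$ is a lattice polytope if and only if $G_{m,2}$ is perfect. Since $G_{m,2}$ is planar (it is the $1$-skeleton of a straight-line triangulation of a rectangle), and planar graphs contain no induced odd antihole of length $\ge 7$ while $\overline{C_5}=C_5$, establishing perfectness reduces by the strong perfect graph theorem to ruling out induced odd cycles of length $\ge 5$. I expect this to come down to a finite but delicate case analysis exploiting the rigidity of the cross-column adjacencies: $(k,2)$ is the unique neighbor in column $k$ of each of $(k+1,1)$ and $(k+1,2)$, while the diagonals $(k,1)$--$(k+1,0)$ and $(k,2)$--$(k+1,0)$ force a chord on any candidate short cycle that leaves column $k$ and later returns to it. Note that for $m\ge 2$ the graph $G_{m,2}$ does contain induced $4$-cycles --- for instance $(m,0)$--$(m-1,0)$--$(m-2,2)$--$(m-1,2)$--$(m,0)$ --- so it is not chordal, and the argument genuinely uses the even-cycle structure rather than a chordality shortcut. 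This combinatorial verification, and not the translation of the univariate machinery, is where the real work lies.
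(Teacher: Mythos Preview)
Your proposal follows exactly the same route as the paper: use \Cref{thm zobnin m2} and \Cref{prop Tm2ord} to identify the standard monomials with the lattice points of $(p-1)P_{(m,2)}$, then recognize $P_{(m,2)}$ as $\QSTAB(G_{m,2})$ for the edge graph of $T_{m,2}$ and invoke Chv\'atal's theorem. The paper's proof is terser---it simply asserts that $G_{m,2}$ is perfect and that its maximal cliques are the triangles of $T_{m,2}$---whereas you supply the flag-complex verification and correctly flag perfectness as the substantive step (which the paper leaves unjustified as well); your observation that $G_{m,2}$ is not chordal for $m\ge 2$, so that perfectness is not automatic, is a useful addition.
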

\begin{proof} 
	Let $G$ be the edge graph of the regular triangulation from \Cref{fig reg triang m2} for \mbox{$m=2$} with $3(m+1)$ vertices and $2+7m$ edges. Since this graph is perfect and the maximal cliques are precisely the triangles of $T_{m,2},$ $\STAB(G)=\QSTAB(G)=P_{(m,2)}.$ By \Cref{thm zobnin m2}, $x_{00}^{u_{00}}\cdots x_{m2}^{u_{m2}}$ is a standard monomial if and only if for all triples of indices $\{(i_1,j_1),(i_2,j_2),(i_3,j_3)\}$ that are a triangle of $T_{m,2},$  $u_{i_1,j_1}+u_{i_2,j_2}+u_{i_3,j_3}\leq p-1.$
\end{proof}
In terms of integer programming, \Cref{prop dimmultivar} translates as follows.
\begin{corollary}
For all $m\in \NN,$ $\dim_{\CC}(R_{m,2}/C_{p,(m,2)})$  is polynomial in $p$ of degree $3(m+1).$ For fixed $p$, this is the number of non-negative integer solutions of the $2m^2$ linear inequalities $x_{i_1,j_1} + x_{i_2,j_2} + x_{i_3,j_3} \leq  p-1,$
	where $\{(i_1,j_1),(i_2,j_2),(i_3,j_3)\}$ runs over the $2m^2$ many triangles of $T_{m,2}.$
\end{corollary}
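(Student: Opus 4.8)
The plan is to obtain the corollary directly from \Cref{prop dimmultivar}, which already carries all the content; what remains is a translation into the language of Ehrhart polynomials and integer programming. First I would invoke \Cref{prop dimmultivar}: it identifies $\dim_{\CC}(R_{m,2}/C_{p,(m,2)})$ with $L_{P_{(m,2)}}(p-1)$, the Ehrhart polynomial of the lattice polytope $P_{(m,2)}$ evaluated at $p-1$. Since $P_{(m,2)}$ is a lattice polytope of dimension $3(m+1)$, Ehrhart's theorem (recalled just before \Cref{thm Ehrhart}) gives that $L_{P_{(m,2)}}$ is a polynomial of degree $3(m+1)$; precomposing with the affine substitution $p \mapsto p-1$ changes neither polynomiality nor degree, so $\dim_{\CC}(R_{m,2}/C_{p,(m,2)})$ is a polynomial in $p$ of degree $3(m+1)$.

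Next I would make the counting statement explicit. By definition $L_{P_{(m,2)}}(p-1) = \lvert (p-1)P_{(m,2)} \cap \ZZ^{3(m+1)}\rvert$. For $t>0$ one has $tP_{(m,2)} = \{\, u \in \RR^{3(m+1)} : u/t \in P_{(m,2)} \,\}$, and scaling each defining inequality of $P_{(m,2)}$ by $t$ shows that $tP_{(m,2)}$ is precisely the set of $u$ with $u_{k,\ell}\ge 0$ for all $(k,\ell)$ and $u_{k_1,l_1}+u_{k_2,l_2}+u_{k_3,l_3}\le t$ for every triangle $\{(k_1,l_1),(k_2,l_2),(k_3,l_3)\}$ of $T_{m,2}$. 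Taking $t=p-1$, the lattice points of $(p-1)P_{(m,2)}$ are exactly the non-negative integer solutions of the system $u_{k_1,l_1}+u_{k_2,l_2}+u_{k_3,l_3}\le p-1$, one linear inequality per triangle of $T_{m,2}$. The degenerate case $p=1$ is consistent: the system then forces $u=0$, matching $L_{P_{(m,2)}}(0)=1$ and the direct computation $R_{m,2}/C_{1,(m,2)}\cong\CC$. Alternatively, one may bypass Ehrhart theory for this part and read the count straight off \Cref{thm zobnin m2}, since the standard monomials $x_{00}^{u_{00}}\cdots x_{m2}^{u_{m2}}$ modulo $C_{p,(m,2)}$ are exactly those with $u_{k_1,l_1}+u_{k_2,l_2}+u_{k_3,l_3}\le p-1$ on every triangle, as established in the proof of \Cref{prop dimmultivar}.

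I expect no genuine obstacle here: all the substance lives in \Cref{prop dimmultivar} (hence ultimately in the Gröbner basis statement \Cref{thm zobnin m2}), and everything above is bookkeeping. The only two points I would state with a sentence of care are the identity $tP=\{u : u/t\in P\}$ for $t>0$, which is what lets one read the dilated polytope off the scaled inequalities, and the boundary value $t=0$, where $P_{(m,2)}$ collapses to a point; both are standard.
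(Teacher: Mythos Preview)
Your proposal is correct and matches the paper's approach: the paper states this corollary without proof, treating it as an immediate consequence of \Cref{prop dimmultivar}, and you have correctly spelled out that translation via Ehrhart's theorem and the description of the dilated polytope by scaled inequalities.
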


\section{Regular unimodular triangulations of the $m\times n$ rectangle}\label{section regtrg}
We now outline how regular unimodular triangulations of the $m \times n$ rectangle give rise to $T$-orderings on the truncated partial differential ring $\CC[x^{(\leq m,\leq n)}],$ or, equivalently, on the polynomial ring $\CC[\{x_{k,\ell}\}_{0\leq k\leq m,0\leq \ell \leq n}].$
\begin{example}[$m=n=2$] Again, denote by $C_{p,(2,2)}$ the ideal in 
	$$R_{2,2} = \CC[x_{00}, x_{01},x_{02},x_{10},x_{11},x_{12},x_{20},x_{21},x_{22}]$$ generated by the $(2p+1)^2$ many coefficients $f_{k,\ell}$ of $s^kt^\ell$ in 
	$$ f_{p,(2,2)}  = (   x_{00} + x_{01}t +x_{02}t^2 +x_{10}s+ x_{11}st + x_{12}st^2 +x_{20}s^2 +x_{21}s^2t+x_{22}s^2t^2 )^p.$$
	Let $\prec$ denote the weighted reverse lexicographical ordering on $R_{2,2}$ for the weight
	$$w_{2,2}\coloneqq \left( 2^8+1,\ldots,2^8+1\right)- \left(1,2,2^2,\ldots ,2^8\right)  \in \NN^9,$$
	i.e., assigning weight $128$ to $x_{00},$ weight $127$ to $x_{01},$ and so on. Note that $w_{2,2}$ induces the triangulation $T_{2,2}$ in the {\em upper} hull convention.
For $p=3,$ we find that within the monomials of the $f_{k,\ell},$ the following $8$ triples of pairwise different variables show up:
	\begin{align*}
	&\{ x_{00}, x_{01}, x_{10}\}, 
	\{x_{01}, x_{02}, x_{10}\},
	\{x_{02}, x_{10}, x_{11}\},
	\{x_{02}, x_{11}, x_{12}\},\\
	&\{x_{10}, x_{11}, x_{20}\},
	\{x_{11}, x_{12}, x_{20}\},
	\{x_{12}, x_{20}, x_{21}\},
	\{x_{12}, x_{21}, x_{22}\},
	\end{align*}
the indices of each of which define a triangle of~$T_{2,2}.$
Computations in {\tt Singular} prove that the coefficients of $f_{3,(2,2)}$ are a Gr\"{o}bner basis of $C_{3,(2,2)} \triangleleft R_{2,2}$ with respect to the weighted reverse lexicographical ordering for~$w_{2,2}.$ Our computations for $p\leq 25$ approve the same statements, confirming \Cref{prop Tm2ord} and \Cref{thm zobnin m2}.
There are $64$ regular unimodular triangulations of the $2\times 2$ square in total, four of which give rise to a Gr\"obner basis in the sense above. Those are depicted in \Cref{fig all22}. 
\begin{figure}
$\begin{tikzpicture}[scale=1.2]
\filldraw (0,0) circle (2pt);
\filldraw (1,0) circle (2pt);
\filldraw (2,0) circle (2pt);
\filldraw (0,1) circle (2pt);
\filldraw (0,2) circle (2pt);
\filldraw (1,1) circle (2pt);
\filldraw (1,2) circle (2pt);
\filldraw (2,1) circle (2pt);
\filldraw (2,2) circle (2pt);
\draw (0,0)--(2,0);
\draw (0,0)--(0,2);
\draw (2,0)--(2,2);
\draw  (0,2)--(2,2);
\draw  (1,0)--(0,1);
\draw  (0,2)--(0,1);
\draw (0,2)--(2,0);
\draw (1,0)--(1,2);
\draw  (1,0)--(0,2);
\draw  (2,1)--(1,2);
\draw (1,2)--(2,0);
\end{tikzpicture}$ \quad 
$\begin{tikzpicture}[scale=1.2]
\filldraw (0,0) circle (2pt);
\filldraw (1,0) circle (2pt);
\filldraw (2,0) circle (2pt);
\filldraw (0,1) circle (2pt);
\filldraw (0,2) circle (2pt);
\filldraw (1,1) circle (2pt);
\filldraw (1,2) circle (2pt);
\filldraw (2,1) circle (2pt);
\filldraw (2,2) circle (2pt);
\draw (0,0)--(2,0);
\draw (0,0)--(0,2);
\draw (2,0)--(2,2);
\draw (0,2)--(2,2);
\draw (0,1)--(1,2);
\draw (0,1)--(2,2);
\draw (0,0)--(2,2);
\draw (0,0)--(2,1);
\draw (1,0)--(2,1);
\draw (0,1)--(2,1);
\end{tikzpicture}$
\quad 
$\begin{tikzpicture}[scale=1.2]
\filldraw (0,0) circle (2pt);
\filldraw (1,0) circle (2pt);
\filldraw (2,0) circle (2pt);
\filldraw (0,1) circle (2pt);
\filldraw (0,2) circle (2pt);
\filldraw (1,1) circle (2pt);
\filldraw (1,2) circle (2pt);
\filldraw (2,1) circle (2pt);
\filldraw (2,2) circle (2pt);
\draw (0,0)--(2,0);
\draw (0,0)--(0,2);
\draw (2,0)--(2,2);
\draw (0,2)--(2,2);
\draw (0,1)--(1,2);
\draw (0,0)--(1,2);
\draw (1,0)--(1,2);
\draw (1,0)--(2,1);
\draw (1,0)--(2,2);
\draw (0,0)--(2,2);
\end{tikzpicture}$
\quad 
$\begin{tikzpicture}[scale=1.2]
\filldraw (0,0) circle (2pt);
\filldraw (1,0) circle (2pt);
\filldraw (2,0) circle (2pt);
\filldraw (0,1) circle (2pt);
\filldraw (0,2) circle (2pt);
\filldraw (1,1) circle (2pt);
\filldraw (1,2) circle (2pt);
\filldraw (2,1) circle (2pt);
\filldraw (2,2) circle (2pt);
\draw (0,0)--(2,0);
\draw (0,0)--(0,2);
\draw (2,0)--(2,2);
\draw (0,2)--(2,2);
\draw (0,1)--(2,1);
\draw (0,1)--(2,0);
\draw (1,0)--(0,1);
\draw (0,2)--(2,0);
\draw (0,2)--(2,1);
\draw (0,2)--(2,2);
\draw (1,2)--(2,1);
\end{tikzpicture}$
\caption{\small The four regular unimodular triangulations of the $2\times 2$ square giving rise to a Gr\"obner basis, the first of which is $T_{2,2}.$ Note that they all arise from $T_{2,2}$ by rotating and flipping.\normalsize}
\label{fig all22}
\end{figure}
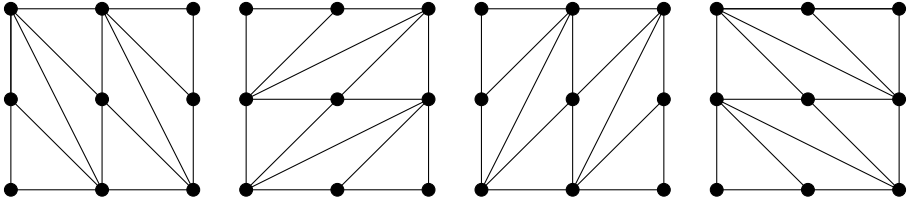
	\end{example}
\begin{example}[$m=3,n=2$] Again, denote by $C_{p,(3,2)}$ the ideal in 
	$$R_{3,2} = \CC\left[x_{00}, x_{01},x_{02},x_{10},x_{11},x_{12},x_{20},x_{21},x_{22},x_{30},x_{31},x_{32}\right]$$ generated by the $(3p+1)(2p+1)$ coefficients of $ f_{p,(3,2)}.$
	Let $\prec$ denote the weighted reverse lexicographical ordering on $R_{3,2}$ for the weight vector
	$$w_{3,2} \coloneqq  (2^{11}+1, \ldots, 2^{11}+1) - (2^0,2^1,\ldots,2^{11})\in \NN^{12}.$$
	For $p=3,$ the following $12$ triples of pairwise different variables show up within the leading monomials of the $70$ coefficients:
	\begin{align*}
	&\{ x_{00}, x_{01}, x_{10}\}, 
	\{x_{01}, x_{02}, x_{10}\},
	\{x_{02}, x_{10}, x_{11}\},
	\{x_{02}, x_{11}, x_{12}\},\\
	&\{x_{10}, x_{11}, x_{20}\},
	\{x_{11}, x_{12}, x_{20}\},
	\{x_{12}, x_{20}, x_{21}\},
	\{x_{12}, x_{21}, x_{22}\},\\	
	&\{x_{20}, x_{21}, x_{30}\},
	\{x_{21}, x_{22}, x_{30}\},
	\{x_{22}, x_{30}, x_{31}\},
	\{x_{22}, x_{31}, x_{32}\},
	\end{align*}
	the indices of each of which define a triangle of~$T_{3,2}.$ 
	Computations in {\tt Singular} prove that the $70$ coefficients $f_{k,\ell}$ of $s^kt^\ell$ in $f_{3,(3,2)}$ are indeed a Gr\"{o}bner basis of $C_{3,(3,2)} \triangleleft R_{3,2}$ w.r.t.~$\prec,$ turning $\prec$ into a $T_{3,2}$-ordering for $p=3.$
	Note that there are $852$ regular unimodular triangulations of the $3\times 2$ rectangle in total,  four of which give rise to a Gr\"obner basis in the sense above. Those four are depicted in \Cref{fig all32}. 
	\begin{figure}[h]
$\begin{tikzpicture}
\filldraw (0,0) circle (2pt);
\filldraw (1,0) circle (2pt);
\filldraw (2,0) circle (2pt);
\filldraw (0,1) circle (2pt);
\filldraw (0,2) circle (2pt);
\filldraw (1,1) circle (2pt);
\filldraw (1,2) circle (2pt);
\filldraw (2,1) circle (2pt);
\filldraw (2,2) circle (2pt);
\filldraw (3,0) circle (2pt);
\filldraw (3,1) circle (2pt);
\filldraw (3,2) circle (2pt);
\draw (0,0)--(3,0);
\draw (0,0)--(0,2);
\draw (3,0)--(3,2);
\draw (0,2)--(3,2);
\draw (1,0)--(0,1);
\draw (0,2)--(0,1);
\draw (0,2)--(2,0);
\draw (0,2)--(1,0);
\draw (1,0)--(1,2);
\draw (2,0)--(2,2);
\draw (1,2)--(3,0);
\draw (1,2)--(2,0);
\draw (2,2)--(3,1);
\draw (2,2)--(3,0);
\end{tikzpicture}$ \ 
$\begin{tikzpicture}
\filldraw (0,0) circle (2pt);
\filldraw (1,0) circle (2pt);
\filldraw (2,0) circle (2pt);
\filldraw (0,1) circle (2pt);
\filldraw (0,2) circle (2pt);
\filldraw (1,1) circle (2pt);
\filldraw (1,2) circle (2pt);
\filldraw (2,1) circle (2pt);
\filldraw (2,2) circle (2pt);
\filldraw (3,0) circle (2pt);
\filldraw (3,1) circle (2pt);
\filldraw (3,2) circle (2pt);
\draw (0,0)--(3,0);
\draw (0,0)--(0,2);
\draw (3,0)--(3,2);
\draw (0,2)--(3,2);
\draw (0,1)--(1,2);
\draw (0,0)--(1,2);
\draw (0,0)--(2,2);
\draw (1,0)--(3,2);
\draw (1,0)--(1,2);
\draw (2,0)--(2,2);
\draw (3,0)--(3,2);
\draw (1,0)--(2,2);
\draw (2,0)--(3,2);
\draw (2,0)--(3,1);
\end{tikzpicture}$ \
$\begin{tikzpicture}
\filldraw (0,0) circle (2pt);
\filldraw (1,0) circle (2pt);
\filldraw (2,0) circle (2pt);
\filldraw (0,1) circle (2pt);
\filldraw (0,2) circle (2pt);
\filldraw (1,1) circle (2pt);
\filldraw (1,2) circle (2pt);
\filldraw (2,1) circle (2pt);
\filldraw (2,2) circle (2pt);
\filldraw (3,0) circle (2pt);
\filldraw (3,1) circle (2pt);
\filldraw (3,2) circle (2pt);
\draw (0,0)--(3,0);
\draw (0,0)--(0,2);
\draw (3,0)--(3,2);
\draw (0,2)--(3,2);
\draw (0,2)--(2,0);
\draw (1,2)--(3,0);
\draw (0,2)--(3,0);
\draw (1,0)--(0,1);
\draw (0,1)--(1,1);
\draw (0,1)--(2,0);
\draw (1,2)--(3,1);
\draw (2,2)--(3,1);
\draw (2,1)--(3,1);
\draw (1,1)--(3,0);
\draw (0,2)--(2,1);
\end{tikzpicture}$\ \ 
$\begin{tikzpicture}
\filldraw (0,0) circle (2pt);
\filldraw (1,0) circle (2pt);
\filldraw (2,0) circle (2pt);
\filldraw (0,1) circle (2pt);
\filldraw (0,2) circle (2pt);
\filldraw (1,1) circle (2pt);
\filldraw (1,2) circle (2pt);
\filldraw (2,1) circle (2pt);
\filldraw (2,2) circle (2pt);
\filldraw (3,0) circle (2pt);
\filldraw (3,1) circle (2pt);
\filldraw (3,2) circle (2pt);
\draw (0,0)--(3,2);
\draw (0,0)--(2,2);
\draw (0,0)--(2,1);
\draw (0,1)--(1,1);
\draw (1,1)--(3,2);
\draw (1,0)--(3,2);
\draw (2,1)--(3,1);
\draw (2,0)--(3,1);
\draw (0,0)--(0,2);
\draw (0,2)--(3,2);
\draw (0,0)--(3,0);
\draw (3,0)--(3,2);
\draw (0,1)--(1,2);
\draw (0,1)--(2,2);
\draw (1,0)--(3,1);
\end{tikzpicture}$ 
		\caption{\small The four regular unimodular triangular regulations of the $3\times 2$ rectangle giving rise to a Gr\"obner basis, the first of which is $T_{3,2}.$\normalsize}
		\label{fig all32}
	\end{figure}
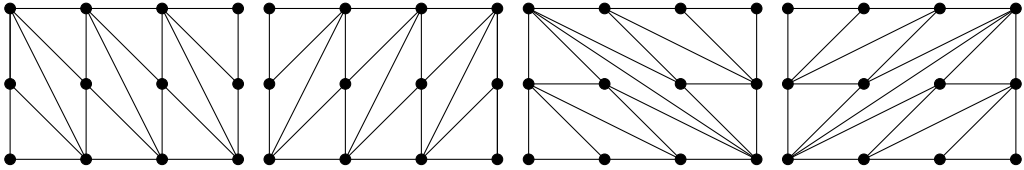
\end{example}
\begin{remark} Computations finish only for small values of $m$ and $p$.
	For $m=5$, we validated with {\tt Singular} that the coefficients of $f_{p,(5,2)}$ are a Gröbner basis with respect to the weighted reverse lexicographical ordering $\prec$ for a vector inducing $T_{5,2}$ in the upper hull convention up to $p= 9,$ approving that $\prec$ is a $T_{5,2}$-ordering for $p\leq 9.$ For $m=8$, we validated this for $p\leq 6.$ For greater values, even though computing over finite characteristic, the computations did not terminate within several days.
\end{remark}
\begin{question}\label{question 1}
For which $m,n,p\in  \NN$ does there exist a regular unimodular triangulation $T$ of the $m\times n$ rectangle such that the coefficients of $f_{p,(m,n)}$ are a Gr\"{o}bner basis of $C_{p,(m,n)}$  with respect to the weighted reverse lexicographical ordering for a vector inducing that triangulation in the {\em upper} hull convention?
\end{question} 
As before, let $T_{m,n}$ be the placing triangulation of the point configuration\linebreak
$\left[(0,0),(0,1),\ldots,(0,n),\ldots,(m,0),\ldots, (m,n)\right].$
We point out that this triangulation does {\em not} lead to a positive answer of Question~\ref{question 1} in general. For instance, $T_{1,3}$ does not give rise to a Gr\"obner basis; only the four triangulations depicted in \Cref{fig all13}~do.

\begin{figure}[h]
		\floatbox[{\capbeside\thisfloatsetup{capbesideposition={right, center},capbesidewidth=8cm}}]{figure}[\FBwidth]
	{\caption{\small The four regular unimodular triangular regulations of the $1\times 3$ rectangle  that give rise to a Gr\"obner basis.\normalsize}\label{fig all13}}{
	$\hspace*{1cm}\begin{tikzpicture}[scale=1.0]
	\filldraw (0,0) circle (2pt);
	\filldraw (1,0) circle (2pt);
	\filldraw (0,1) circle (2pt);
	\filldraw (0,2) circle (2pt);
	\filldraw (0,3) circle (2pt);
	\filldraw (1,1) circle (2pt);
	\filldraw (1,2) circle (2pt);
	\filldraw (1,3) circle (2pt);
	\draw (0,0)--(0,3);
	\draw (0,0)--(1,0);
	\draw (1,0)--(1,3);
	\draw  (0,3)--(1,3);
	\draw  (0,3)--(1,2);
	\draw  (0,2)--(1,1);
	\draw  (0,1)--(1,0);
	\draw  (0,1)--(1,1);
	\draw (0,2)--(1,2);
	\end{tikzpicture}$ \ 
	$\begin{tikzpicture}[scale=1.0]
	\filldraw (0,0) circle (2pt);
	\filldraw (1,0) circle (2pt);
	\filldraw (0,1) circle (2pt);
	\filldraw (0,2) circle (2pt);
	\filldraw (0,3) circle (2pt);
	\filldraw (1,1) circle (2pt);
	\filldraw (1,2) circle (2pt);
	\filldraw (1,3) circle (2pt);
	\draw (0,0)--(0,3);
	\draw (0,0)--(1,0);
	\draw (1,0)--(1,3);
	\draw  (0,3)--(1,3);
	\draw (0,0)--(1,1);
	\draw (0,1)--(1,2);
	\draw (0,2)--(1,3);
	\draw  (0,1)--(1,1);
	\draw (0,2)--(1,2);
	\end{tikzpicture}$ \ 
	$\begin{tikzpicture}[scale=1.0]
	\filldraw (0,0) circle (2pt);
	\filldraw (1,0) circle (2pt);
	\filldraw (0,1) circle (2pt);
	\filldraw (0,2) circle (2pt);
	\filldraw (0,3) circle (2pt);
	\filldraw (1,1) circle (2pt);
	\filldraw (1,2) circle (2pt);
	\filldraw (1,3) circle (2pt);
	\draw (0,0)--(0,3);
	\draw (0,0)--(1,0);
	\draw (1,0)--(1,3);
	\draw  (0,3)--(1,3);
	\draw (0,1)--(1,0);
	\draw  (0,2)--(1,0);
	\draw  (0,2)--(1,1);
	\draw  (0,3)--(1,2);
	\draw  (0,3)--(1,1);
	\end{tikzpicture}$ \ 
	$\begin{tikzpicture}[scale=1.0]
	\filldraw (0,0) circle (2pt);
	\filldraw (1,0) circle (2pt);
	\filldraw (0,1) circle (2pt);
	\filldraw (0,2) circle (2pt);
	\filldraw (0,3) circle (2pt);
	\filldraw (1,1) circle (2pt);
	\filldraw (1,2) circle (2pt);
	\filldraw (1,3) circle (2pt);
	\draw (0,0)--(0,3);
	\draw (0,0)--(1,0);
	\draw (1,0)--(1,3);
	\draw  (0,3)--(1,3);
	\draw  (0,0)--(1,1);
	\draw (0,0)--(1,2);
	\draw  (0,1)--(1,2);
	\draw (0,1)--(1,3);
	\draw  (0,2)--(1,3);
	\end{tikzpicture}$\hspace*{-1cm}}
\end{figure}
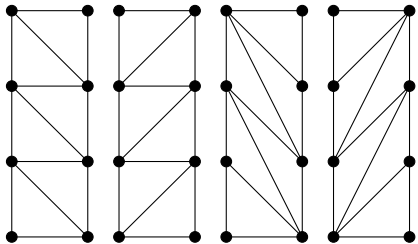

For $m=n=3,$ the question has a negative answer.
There are in total $46\,452$ regular unimodular triangulations of the $3\times 3$ square. The coefficients of $f_{3,(3,3)}$  are not a Gr\"{o}bner basis of $C_{3,(3,3)}$ w.r.t. the weighted reverse lexicographical ordering for any of the vectors in the strict interior of the secondary cone of those triangulations.  
\begin{remark}
As pointed out in~\cite{BJMS15}, there are---up to symmetries---$5\,941$ regular unimodular triangulations of the $3\times 3$ square. It would actually be sufficient to check the Gr\"obner basis property for each of those.
\end{remark} It would be intriguing to find the reason for this failure and to determine all $m,n\in \NN$ for which Question~\ref{question 1} has a positive answer. Let us point out that this problem gets computationally expensive quickly: for the $4\times 2$ rectangle, there are $12\,170$ regular unimodular triangulations, whereas for the $4\times 3$ rectangle, there are already $2\,822\,146.$

Now let $T$ be a triangulation of the $m\times n$ rectangle as asked for in Question~\ref{question 1}. We end this article with two questions.
\begin{question}
Which unimodular regular triangulations of the $m\times 2 $ rectangle give rise to a Gr\"obner basis---can they be deduced from the triangulations found for $m=2,3$  (cf. Figures \ref{fig all22} and \ref{fig all32})?
\end{question}
\begin{question}
 As $p$ varies, is $\dim_{\CC}(R_{m,n}/C_{p,(m,n)})$ the Ehrhart polynomial of the fractional stable set polytope of the edge graph of $T$ and is this graph perfect?
\end{question} 
\smallskip
\small
\bibliography{literature}
\bibliographystyle{abbrv} 
\bigskip
\normalsize
\end{document}